\title{Ranks of Overpartitions: Asymptotics and Inequalities}
\author{Alexandru Ciolan}
\address{Mathematical Institute, University of Cologne, Weyertal 86--90, 50931
	Cologne, Germany}
\email{aciolan@math.uni-koeln.de}
\newtheorem{Thm}{Theorem}
\newtheorem{Con}{Conjecture}
\newtheorem{Lem}{Lemma}
\newtheorem{Cor}{Corollary}
\theoremstyle{remark}
\newtheorem{rem}{Remark}
\newtheorem{exa}{Example}
\newcommand{\cal}{\mathcal}
\newcommand{\bb}{\mathbb}
\DeclareMathOperator{\Res}{Res}
\DeclareMathOperator{\RE}{Re}
\DeclareMathOperator{\IM}{Im}
\let\@@pmod\pmod
\DeclareRobustCommand{\pmod}{\@ifstar\@pmods\@@pmod}
\def\@pmods#1{\mkern4mu({\operator@font mod}\mkern 6mu#1)}
\newcommand{\N}{\mathbb{N}}
\renewcommand{\O}{\mathcal{O}}
\newcommand{\U}{\mathcal{U}}
\newcommand{\Z}{\mathbb{Z}}
\newcommand{\R}{\mathbb{R}}
\newcommand{\C}{\mathbb{C}}
\newcommand{\V}{\mathcal{V}}
\renewcommand{\l}{\ell}
\newcommand{\ol}{\overline}
\newcommand{\primesum}{\sideset{}{'}}
\numberwithin{equation}{section} 
\begin{document}
\keywords{Asymptotics, circle method, Dyson's rank, inequalities, Kloosterman sums, overpartitions}
\subjclass[2010]{11P72, 11P76, 11P82}
\begin{abstract}
In this paper we compute asymptotics for the coefficients of an infinite class of overpartition rank generating functions. Using these results, we show that $ \ol N(a,c,n), $ the number of overpartitions of $ n $ with rank congruent to $ a  $ modulo $ c, $ is equidistributed with respect to $ 0\le a< c, $ for any $ c\ge2, $  as $ n\to\infty $   and, in addition, we prove some inequalities between ranks of overpartitions conjectured by Ji, Zhang and Zhao (2018), and Wei and Zhang (2018) for $ n=6 $ and  $ n=10. $ 
\end{abstract}
\maketitle
\section{Introduction and statement of results}
\subsection{Motivation}
A \textit{partition} of a positive integer $ n $ is a non-increasing sequence of positive integers (called \textit{parts}), usually written as a sum, which add up to $ n. $ The number of partitions of $ n $ is denoted by $ p(n). $ For example, $ p(4)=5, $ as the partitions of $ 4 $ are $ 4, ~ 3+1, ~ 2+2, ~ 2+1+1,~1+1+1+1. $ Extending the definition, we set, by convention, $p(0)=1$ and $ p(n)=0 $ for $ n<0. $ 
\par Among many other famous achievements, Ramanujan \cite{Ram} proved that if $ n\ge0, $ then 
\begin{align*}\label{RamCongs}
p(5n+4)&\equiv  {0 \pmod*5,}\nonumber\\
p(7n+5)&\equiv  {0 \pmod*7,}\\
p(11n+6)&\equiv  {0 \pmod*{11}.}\nonumber
\end{align*} 
\par In order to give a combinatorial proof of these congruences, Dyson \cite{Dyson} introduced the \textit{rank} of a partition, often known also as \textit{Dyson's rank}, which is defined to be the largest part of the partition minus the number of its parts. Dyson conjectured that the partitions of $ 5n+4 $ form 5 groups of equal size when sorted by their ranks modulo $ 5, $ and that the same is true for the partitions of $ 7n+5 $ when working modulo 7, conjecture which was proved by Atkin and Swinnerton-Dyer \cite{ASD}.
\par An \textit{overpartition} of $ n $ is a partition in which the \textit{first} occurrence of a part may be overlined. We denote by $ \overline p(n) $ the number of overpartitions of $ n. $ For example, $ \overline{p}(4)=14, $ as the overpartitions of $ 4 $ are $4,~\overline 4,~3 + 1,~\overline 3+ 1,~3 +\overline 1,~\overline 3 + \overline 1,~2+ 2,~\ol 2 + 2,~2+ 1+1,~\ol2 + 1 + 1,~2+ \ol1 + 1,~\ol2 +\ol 1 + 1,~1+1 + 1 + 1,~\ol 1+1 + 1 + 1.$
Overpartitions are natural combinatorial structures associated with the $q$-binomial
theorem, Heine's transformation or Lebesgue's identity. For an overview and further motivation, the reader is referred to \cite{CL} and \cite{Pak}. 
\par Both the partition and overpartition ranks have been extensively studied. By proving that some generating functions associated to the rank are holomorphic parts of harmonic Maass forms, Bringmann and Ono \cite{BOno} showed that the rank partition function satisfies some other congruences of Ramanujan type. In the same spirit, Bringmann and Lovejoy \cite{BJoy} proved that the overpartition rank generating function is the holomorphic part of a harmonic Maass form of weight 1/2, while Dewar \cite{Dewar} made certain refinements. 
\par It is customary to denote by $ N(m,n) $ the number of partitions of $ n $ with rank $ m $ and by $ N(a,m,n) $ the number of partitions of $ n $ with rank congruent to $ a $ modulo $ m. $ The corresponding quantities for overpartitions,  $ \ol N(m,n) $ and $ \ol N(a,m,n), $ are denoted by an overline.
\par By means of generalized Lambert series, Lovejoy and Osburn \cite{LO} gave formulas for the rank differences $ \ol N(s,\l,n)-\ol N(t,\l,n) $ for $ \l=3$ and $\l= 5,$ while rank differences for $ \l =7 $ were determined by Jennings-Shaffer \cite{CJS}.~Recently, by using $ q $-series manipulations and the  $ 3 $ and $ 5 $-dissection of the overpartition rank generation function, Ji, Zhang and Zhao \cite{JZZ} proved some identities and inequalities for the rank difference generating functions of overpartitions modulo 6 and 10, and conjectured a few others. Some further inequalities were conjectured by Wei and Zhang \cite{WZ}.
\par It is one goal of this paper to prove these conjectures. The other, more general, goal is to compute asymptotics for the ranks of overpartitions and this is what we will start with, the inequalities mentioned above, as well as the asymptotic equidistribution of $ N(a,c,n), $  following then as a consequence. In doing so, we  rely on the Hardy-Ramanujan circle method and the modular transformations for overpartitions established by Bringmann and Lovejoy \cite{BJoy}. While the main ideas are essentially those used by Bringmann \cite{B} in computing asymptotics for partition ranks, several complications arise and some modifications need to be carried out. 
\par The paper is structured as follows. The rest of this section is dedicated to introducing some notation that is needed in the sequel and to formulating our main results.  An outline of the proof of Theorem \ref{MainThm} is given in Section \ref{Strategy}, and its proof in detail, along with that of the equidistribution of $ N(a,c,n), $ is given in Section \ref{Proof of Theorem 1}. In the final section we show how to use Theorem \ref{MainThm} in order to prove the inequalities conjectured by Ji, Zhang and Zhao \cite{JZZ}, and Wei and Zhang \cite{WZ}, which are stated in Theorems \ref{Thm2}--\ref{Thm4} together with some other inequalities. 
   
\subsection{Notation and preliminaries}\label{Notation}
The overpartition generating function (see, e.g., \cite{CL}) is given by 
\begin{equation}\label{GenFunOver}
\overline P(q):=\sum_{n\ge 0}\overline{p}(n)q^n=\frac{\eta(2z)}{\eta^2(z)}=1+2q+4q^2+8q^3+14q^4+\cdots,
\end{equation}
where $$ \eta(z):=q^{\frac1{24}}\prod_{n=1}^{\infty}(1-q^n) $$ denotes, as usual, Dedekind's eta function and $ q=e^{2\pi iz}, $ with $ z\in\C$ and $\IM(z)>0. $ 
If we use the standard $ q $-series notation 
\begin{align*}
(a)_n&:=\prod_{r=0}^{n-1}(1-aq^r),\\
(a,b)_n&:=\prod_{r=0}^{n-1}(1-aq^r)(1-bq^r),
\end{align*}
for $ a,b\in\C $ and $ n\in\N\cup\{\infty\}, $ then we know from \cite{Lovejoy} that
\begin{align}\label{overpartsrank}
	\O(u;q):=\sum_{n=0}^{\infty}\sum_{m=-\infty}^{\infty}\overline{N}(m,n)u^mq^n&=\sum_{n=0}^{\infty}\frac{(-1)_nq^{\frac12n(n+1)}}{(uq,q/u)_n}\nonumber\\
	&=\frac{(-q)_{\infty}}{(q)_{\infty}}\left(1+2\sum_{n\ge1}\frac{(1-u)(1-u^{-1})(-1)^nq^{n^2+n}}{(1-uq^n)(1-u^{-1}q^n)} \right) .
\end{align}
\par If $ 0<a<c $ are coprime positive integers, and if by $ \zeta_n=e^{\frac{2\pi i}{n}} $ we denote the primitive $ n $-th root of unity, we set
\begin{equation}\label{OA}
\cal O \left( \frac ac;q\right) :=\cal O \left(\zeta_c^a;q \right)=1+\sum_{n=1}^{\infty} A\left(\frac ac;n \right)q^n.
\end{equation}
\par Let $ k $ be a positive integer. Set $  \widetilde{k}=0 $ if $k$ is even, and $ \widetilde{k}=1 $ if $ k $ is odd. Moreover, put $ k_1=\frac{k}{(c,k)}, $ $ c_1=\frac c{(c,k)}, $ and let the integer $ 0\le \ell <c_1 $ be defined by the congruence $ \ell\equiv ak_1\pmod*{c_1}. $ If $ \frac bc\in(0,1), $ let 
\[s(b,c):=\begin{cases}
0 & \text{if~}0<\frac bc\le\frac14,\\
1 & \text{if~}\frac14<\frac bc\le\frac{3}{4},\\
2 & \text{if~}\frac{3}{4}<\frac bc<1,
\end{cases}
\quad\text{and}\quad
t(b,c):=\begin{cases}
1 & \text{if~}0<\frac bc<\frac12,\\
3 & \text{if~}\frac12<\frac bc<1.
\end{cases}
\]
\par Throughout we will use, for reasons of space, the shorthand notation $ s=s(b,c) $ and $ t=t(b,c). $ In what follows, $ 0\le h< k $ are coprime integers (in case $ k=1, $ we set $ h=0 $ and this is the only case when $ h=0 $ is allowed), and $ h'\in\mathbb Z $ is defined by the congruence $ hh'\equiv-1\pmod* k. $ Further, let \[\omega_{h,k}:=\exp\bigg( \pi i\sum_{\mu=0}^{k-1}\left( \left( \frac{\mu}{k}\right) \right)\left( \left(\frac{h\mu}{k} \right) \right)   \bigg) \]
be the multiplier occurring in the transformation law of the partition function, where \[((x)):=\begin{cases}
x-\lfloor x\rfloor-\frac12 & \text{if~}x\in\R\setminus\Z,\\
0 & \text{if~}x\in\Z.
\end{cases}\]
\begin{rem}
The sums $$ S(h,k):=\sum_{\mu=0}^{k-1}\left( \left( \frac{\mu}{k}\right) \right)\left( \left(\frac{h\mu}{k} \right) \right)    $$ are known in the literature as \textit{Dedekind sums.} For a nice discussion of their properties and how to compute them for small values of $ h, $   the reader is referred to \cite[p.~62]{Apostol}. 
\end{rem} 
\par We next define several  Kloosterman sums. Here and throughout we write $ \sum'_h $ to denote summation over the integers  $ 0\le h<k $ that are coprime to $ k. $ 
\par If $ c\mid k, $ let
\begin{equation*}
\label{Kloost0}
A_{a,c,k}(n,m):=(-1)^{k_1+1}\tan\left(\frac{\pi a}{c} \right) \primesum\sum_h\frac{\omega^2_{h,k}}{\omega_{h,k/2}}\cdot \cot\left(\frac{\pi ah'}{c} \right) \cdot e^{-\frac{2\pi ih'a^2k_1}{c}}\cdot e^{\frac{2\pi i}{k}(nh+mh')}, 
\end{equation*} and
\begin{equation*}
\label{Kloost1}
B_{a,c,k}(n,m):=-\frac{1}{\sqrt2}\tan\left(\frac{\pi a}{c} \right) \primesum\sum_h\frac{\omega^2_{h,k}}{\omega_{2h,k}}\cdot \frac{1}{\sin\left(\frac{\pi ah'}{c} \right)}\cdot e^{-\frac{2\pi ih'a^2k_1}{c}}\cdot e^{\frac{2\pi i}{k}(nh+mh')}.
\end{equation*} 
\par If $ c\nmid k $ and $ 0<\frac{\ell}{c_1}\le\frac14,$   let 
\begin{equation*}
D_{a,c,k}(n,m):=\frac1{\sqrt2}\tan\left(\frac{\pi a}{c} \right)\primesum\sum_h\frac{\omega^2_{h,k}}{\omega_{2h,k}}\cdot  e^{\frac{2\pi i}{k}(nh+mh')},
\end{equation*} 
and if $ c\nmid k $ and $ \frac34<\frac{\ell}{c_1}<1,$ let 
\begin{equation*}
D_{a,c,k}(n,m):=-\frac1{\sqrt2}\tan\left(\frac{\pi a}{c} \right)\primesum\sum_h\frac{\omega^2_{h,k}}{\omega_{2h,k}}\cdot  e^{\frac{2\pi i}{k}(nh+mh')}.
\end{equation*}

\par To state our results, we need at last the following quantities. The motivation behind their expressions becomes clear if one writes down explicitly the computations done in Section \ref{Proof of Theorem 1}. If $ c\nmid k, $ let
\begin{equation}\label{delta}
\delta_{c,k,r}:=\begin{cases}
 \frac{1}{16}-\frac{ \ell}{2c_1}+\frac{\ell^2}{c_1^2}-r\frac{\ell}{c_1} &\text{{if~}}0<\frac{\ell}{c_1}\le\frac14,\\
0 & \text{if~}\frac14<\frac{\ell}{c_1}\le\frac34,\\
 \frac{1}{16}-\frac{3\ell}{2c_1}+\frac{\ell^2}{c_1^2}+\frac12-r\left(1-\frac{\ell}{c_1} \right)  & \text{if~}\frac34<\frac{\ell}{c_1}<1,
\end{cases}
\end{equation}
%
and
\begin{equation}\label{mackr}
m_{a,c,k,r}:=\begin{cases}
-\frac{1}{2c_1^2}( 2(ak_1-\l)^2+c_1(ak_1-\ell) +2rc_1(ak_1-\ell)) &\text{{if~}}0<\frac{\ell}{c_1}\le\frac14,\\
0 & \text{if~}\frac14<\frac{\ell}{c_1}\le\frac34,\\
-\frac{1}{2c_1^2}( 2(ak_1-\l)^2 +3c_1(ak_1-\ell )-2rc_1(ak_1-\l)-c_1^2(2r-1)) & \text{if~}\frac34<\frac{\ell}{c_1}<1.
\end{cases}
\end{equation}
%
\subsection{Statement of results}We are now in shape to state our main results.
\begin{Thm}\label{MainThm}
If $ 0<a<c $ are coprime positive integers with $ c>2, $ and $ 
\varepsilon>0 $ is arbitrary, then  
\begin{align}\label{thm}
A\left( \frac ac;n\right) &=i\sqrt{\frac 2n}\sum_{\substack{1\le k\le\sqrt n\\c|k,~2\nmid k}}\frac{B_{a,c,k}(-n,0)}{\sqrt k}\cdot\sinh\left( \frac{\pi\sqrt n}{k}\right)\nonumber \\
&\phantom{=~}+2\sqrt{\frac 2n}\sum_{\substack{1\le k\le \sqrt n\\c\nmid k,~2\nmid k,~c_1\ne4\\r\ge0,~ \delta_{c,k,r}>0}}\frac{D_{a,c,k}(-n,m_{a,c,k,r})}{\sqrt k}\cdot \sinh \left( \frac{4\pi\sqrt{\delta_{c,k,r}n}}{k} \right)+O_c(n^{\varepsilon}).
\end{align}
\end{Thm}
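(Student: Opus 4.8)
The plan is to apply the Hardy–Ramanujan circle method to the generating function $\cal O(a/c;q)=\sum_{n\ge0}A(a/c;n)q^n$ from \eqref{OA}, starting from the Cauchy integral
\[
A\left(\frac ac;n\right)=\frac1{2\pi i}\int_{\mathcal C}\frac{\cal O(\zeta_c^a;q)}{q^{n+1}}\,dq,
\]
taken over a circle of radius $e^{-2\pi/n}$ inside the unit disk. First I would dissect this circle into Farey arcs indexed by fractions $h/k$ with $1\le k\le\sqrt n$, using the standard Ford-circle / Farey-fraction decomposition, and change variables $q=e^{2\pi i(h/k+iz)}$ on each arc, so that $z$ ranges over an interval of length $\ll 1/(k\sqrt n)$ around $0$. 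On the arc at $h/k$ one needs the behavior of $\cal O(\zeta_c^a;e^{2\pi i(h/k+iz)})$ as $z\to0$, which is precisely what the modular transformation laws of Bringmann and Lovejoy \cite{BJoy} provide: depending on whether $c\mid k$ or $c\nmid k$ (and, in the latter case, on the value of $\ell/c_1$), the function is expressed in terms of either a Mordell-type integral contributing the principal part, or a $\vartheta$/Lerch-type piece that is exponentially small. This is where the Kloosterman-sum weights $B_{a,c,k}$, $D_{a,c,k}$ and the shift parameters $\delta_{c,k,r}$, $m_{a,c,k,r}$ enter — they package the automorphy factors $\omega_{h,k}$, the cotangent/secant factors from the poles of $\cal O$ at roots of unity, and the exponents produced when the modular substitution $z\mapsto (h'+iz^{-1})/k$ is applied to $q^{n^2+n}$-type terms.

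Next I would collect the main contributions. The transformation formula on each arc has a dominant term of the shape $z^{-1/2}$ (resp. $z^{-1/2}$ times an exponential in $\delta_{c,k,r}/z$) times an error that is $O(1)$ or exponentially small uniformly in $z$ on the arc. Substituting this into the Farey-arc integrals and summing over $0\le h<k$ coprime to $k$ assembles exactly the Kloosterman sums $B_{a,c,k}(-n,0)$ and $D_{a,c,k}(-n,m_{a,c,k,r})$. The $z$-integral over each arc, after extending it to the full Ford circle with a controlled error, is a Bessel-type integral that evaluates (as in the classical Rademacher/Hardy–Ramanujan analysis) to the $\sinh$ terms: $\sinh(\pi\sqrt n/k)$ from the $c\mid k$ arcs and $\sinh(4\pi\sqrt{\delta_{c,k,r}n}/k)$ from the $c\nmid k$ arcs, with the prefactors $i\sqrt{2/n}/\sqrt k$ and $2\sqrt{2/n}/\sqrt k$ emerging from the Jacobian and the evaluation of the integral. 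The sum over $r\ge0$ with $\delta_{c,k,r}>0$ is finite for each $k$ because $\delta_{c,k,r}$ is decreasing in $r$ and eventually nonpositive, so only finitely many shifts contribute a growing exponential; the remaining shifts, together with the exponentially small $\vartheta$-pieces and the case $\frac14<\ell/c_1\le\frac34$ (where $\delta_{c,k,r}=0$), all feed into the error term.

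Finally I would bound the error term. The contributions that must be shown to be $O_c(n^\varepsilon)$ are: (i) the difference between integrating over Farey arcs and over full Ford circles; (ii) the $O(1)$-type remainders in the transformation formulas, summed over all $h,k$ with $k\le\sqrt n$; and (iii) the non-principal exponentially small terms. Estimate (i) is classical. For (ii) and (iii) the key point is that, after summing trivially over $h$ (gaining a factor $k$) and integrating over an arc of length $\ll1/(k\sqrt n)$, each $k$ contributes $\ll \sqrt n\cdot k\cdot\frac1{k\sqrt n}\cdot(\text{size of remainder})/\sqrt k$, and summing over $k\le\sqrt n$ gives a power of $n$ that, combined with the square-root cancellation one does \emph{not} actually need here, stays bounded by $n^\varepsilon$; the $\tan(\pi a/c)$ and $\cot,\csc$ factors at $h/k$ are bounded in terms of $c$ alone, which is the source of the subscript in $O_c$. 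I expect the main obstacle to be the careful bookkeeping in step (ii)–(iii): unlike the partition-rank case treated by Bringmann \cite{B}, here the transformation of $\cal O(\zeta_c^a;q)$ produces \emph{several} regimes according to $c\mid k$ versus $c\nmid k$ and the location of $\ell/c_1$ in $(0,\frac14]$, $(\frac14,\frac34]$, or $(\frac34,1)$, and in each regime one must identify precisely which poles of the Lerch-type sum cross the contour during the modular substitution, track the resulting finite list of shifts $m_{a,c,k,r}$, and verify that the leftover Mordell integrals are genuinely $O(1)$ uniformly on the arc — this is the step where the definitions \eqref{delta} and \eqref{mackr} have to be reverse-engineered from the analysis and where the modifications to Bringmann's argument are concentrated.
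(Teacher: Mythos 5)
Your overall architecture coincides with the paper's: Cauchy's theorem, Farey dissection of order $N=\lfloor n^{1/2}\rfloor$, the Bringmann--Lovejoy transformations split according to the parity of $k$, $c\mid k$ versus $c\nmid k$ and the location of $\ell/c_1$, identification of the principal parts (which is exactly where $\delta_{c,k,r}$ and $m_{a,c,k,r}$ come from, with only finitely many $r$ per class of $k$), and evaluation of the arc integrals by the Dragonette/Rademacher contour change plus the Hankel formula, yielding the $\sinh$ main terms. Up to that point your outline is sound.

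The genuine gap is in your error analysis. You propose to bound the non-principal contributions by summing \emph{trivially} over $h$ (``gaining a factor $k$'') and you assert that square-root cancellation ``is not actually needed''; that is false for the statement as given. With the trivial bound each modulus $k$ contributes roughly $k\cdot\frac1{k(N+1)}\cdot n^{1/2}k^{-1/2}\cdot O_c(1)\asymp k^{-1/2}$, and summing over $k\le\sqrt n$ yields an error of size $n^{1/4}$, not $O_c(n^{\varepsilon})$. The paper gets $n^{\varepsilon}$ precisely by exploiting cancellation in the $h$-sums: it first splits each arc integral, following Rademacher, into a central interval $[-\tfrac1{k(N+k)},\tfrac1{k(N+k)}]$ (independent of $h$, so the $h$-sum becomes a complete Kloosterman sum) plus two pieces whose endpoints constrain $h'$ to at most two intervals modulo $k$ (incomplete Kloosterman sums), and then applies the Andrews-type estimate (Lemma \ref{estimateKloosterman}) $\ll (24n+1,k)^{1/2}k^{1/2+\varepsilon}$ to both, the twists by $\csc\left(\frac{\pi ah'}{c}\right)$, $\cot\left(\frac{\pi ah'}{c}\right)$ and $e^{-2\pi ih'a^2k_1/c}$ being removed by sorting $h'$ into residue classes modulo $c$ or $2c$; a divisor argument then gives $O_c(n^{\varepsilon})$. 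The same splitting is also needed to extract the main terms, since $\vartheta'_{h,k},\vartheta''_{h,k}$ depend on $h$. A second, smaller omission: the leftover Mordell-type integrals $I_{a,c,k,\nu}$ are not uniformly $O(1)$; one needs the bound $z^{1/2}I_{a,c,k,\nu}\ll_c k^{-1/2}n^{1/2}g_{a,c,k,\nu}$ of Lemma \ref{LemmawithMordell} (with Cauchy principal values and a residue computation in the pole cases $\nu=\frac{ka}{c}$, $k\left(1-\frac ac\right)$, etc.), together with $\sum_{\nu}g_{a,c,k,\nu}\ll k^{\varepsilon}$, to push $\sum_7$ and $\sum_8$ into the error term; here, and only here, trivial $h$-summation suffices.
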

\begin{rem}
In computing the sums $ B_{a,c,k} $ and $ D_{a,c,k} $ from Theorem \ref{MainThm}, the integer $ h' $ is assumed to be even, cf. Bringmann and Lovejoy \cite[pp.~14--15]{BJoy}.
\end{rem} While the sums involved in the asymptotic formula of $A\left( \frac ac;n\right)  $ might look a bit cumbersome at first, for small values of $ c $ they can be computed without much effort. We exemplify below the particular instances when $ c=3 $ and $ c=10; $ we will come back to Example \ref{ex6}, in more detail, in Section \ref{Ineqs}. 
\begin{exa}
If $ a=1 $ and $ c=3, $ the second sum in \eqref{thm} does not contribute (as $ \delta_{3,k,r}=0$),      while the main asymptotic contribution from the first sum is given by the term corresponding to $ k=3. $ If $ h=1, $ we have $ h'=2 $ and $ \omega_{1,3}=e^{\frac{\pi i}{6}}, $ and if $ h=3, $ we have $ h'=-2 $ and $ \omega_{2,3}=e^{-\frac{\pi i}{6}}. $ Without difficulty, we see that $ B_{1,3,3}(-n,0)=-2i\sqrt2 $ if $ n\equiv1\pmod*3,  $ and  $ B_{1,3,3}(-n,0)=i\sqrt2 $ if $ n\equiv0,2\pmod*3,  $ from where  
\[A\left(\frac1{3};n \right)\sim\begin{cases}\phantom{-}\dfrac4{\sqrt{3n}}\tan\left( \dfrac{\pi}{3}\right)\sinh \left( \dfrac{\pi\sqrt{n}}{3} \right)&\text{if~}n\equiv1\pmod*3,\\
-\dfrac2{\sqrt{3n}}\tan\left( \dfrac{\pi}{3}\right)\sinh \left( \dfrac{\pi\sqrt{n}}{3} \right)&\text{if~}n\equiv0,2\pmod*3. \end{cases}\]
\end{exa}
\begin{exa}\label{ex6}
If $ a=1 $ and $ c=10, $ the first sum in \eqref{thm} does not contribute, while the main asymptotic contribution from the second sum is given by the term corresponding to $ k=1. $ In this case, $ k_1=\ell=1, $ $ c_1=10 $  and the only positive value of $\delta_{10,1,r} $ is attained for $ r=0.  $ As such, we have $\delta_{10,1,0}=\frac9{400},$ $ m_{1,10,1,0}=0 $ and $ D_{1,10,1}(-n,0)=\frac1{\sqrt2}\tan\left( \frac{\pi}{10}\right),$ hence 
\[A\left(\frac1{10};n \right)\sim\frac2{\sqrt n}\tan\left( \frac{\pi}{10}\right)\sinh \left( \frac{3\pi\sqrt{n}}{5} \right). \]
\end{exa}
On using Theorem \ref{MainThm} together with the identity
\begin{equation}\label{orthogonality}
\sum_{n=0}^{\infty}\overline N(a,c,n)q^n=\frac1c\sum_{n=0}^{\infty}\overline p(n)q^n+\frac1c\sum_{j=1}^{c-1}\zeta_c^{-aj}\cdot \O(\zeta_c^j;q),
\end{equation}
which follows by the orthogonality of roots of unity, and the well-known fact (see, e.g., \cite{HR}) that  
\[\overline p(n)\sim \frac1{8n}e^{\pi\sqrt n}\] as $ n\to \infty, $ we obtain the following consequence.
\begin{Cor}
\label{Corollary2}
If $ c\ge2 $, then for any $ 0\le a\le c-1 $ we have, as $ n\to\infty, $ \[\ol N(a,c,n)\sim \frac{\overline p(n)}c\sim \frac{1}{c}\cdot \frac{e^{\pi\sqrt n}}{8n}.\]
\end{Cor}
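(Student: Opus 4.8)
The plan is to extract the coefficient of $q^n$ from the orthogonality identity \eqref{orthogonality}: for $n\ge 1$ this gives
\[
\overline N(a,c,n)=\frac{\overline p(n)}{c}+\frac1c\sum_{j=1}^{c-1}\zeta_c^{-aj}\,[q^n]\,\O(\zeta_c^j;q).
\]
Since $\overline p(n)\sim e^{\pi\sqrt n}/(8n)$, the first summand is already $\tfrac1c\cdot\tfrac{e^{\pi\sqrt n}}{8n}\bigl(1+o(1)\bigr)$, so the corollary reduces to showing that the remaining sum is $o\bigl(e^{\pi\sqrt n}/n\bigr)$, i.e. of strictly smaller exponential order than $\overline p(n)$. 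First I would reduce each root of unity to lowest terms, $\zeta_c^j=\zeta_{c'}^{a'}$ with $c'=c/(j,c)$ and $(a',c')=1$, so that $[q^n]\,\O(\zeta_c^j;q)=A\!\left(\tfrac{a'}{c'};n\right)$ for $n\ge1$, in the notation of \eqref{OA}. Since $1\le j\le c-1$, the reduced modulus satisfies $c'\ge2$; the value $c'=2$ arises precisely from the term $\zeta_c^{c/2}=-1$ when $c$ is even (in particular when $c=2$).

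For the terms with $c'\ge3$ I would apply Theorem \ref{MainThm} together with two routine ingredients. First, the trivial bounds $|B_{a',c',k}(-n,0)|\ll_{c'}k$ and $|D_{a',c',k}(-n,m)|\ll_{c'}k$, which hold because each summand has modulus $\ll_{c'}1$ (the quotients $\omega_{h,k}^2/\omega_{h,k/2}$ and $\omega_{h,k}^2/\omega_{2h,k}$ have modulus $1$, and $1/|\sin(\pi a'h'/c')|\le 1/\sin(\pi/c')$). Second, the elementary fact that every exponential rate produced by \eqref{thm} is strictly below $\pi$: in the first sum $k$ runs over odd multiples of $c'\ge3$, so its rate is at most $\pi/3$; in the second sum the rate is $4\sqrt{\delta_{c',k,r}}\,/k$, and I claim $\delta_{c',k,r}\le(\tfrac14-\tfrac1{c'})^2<\tfrac1{16}$. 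Indeed, writing $x=\ell/c_1$, formula \eqref{delta} gives, at $r=0$ (where $\delta$ is maximal, being strictly decreasing in $r$), $\delta=(x-\tfrac14)^2$ on $(0,\tfrac14]$ and $\delta=(x-\tfrac34)^2$ on $(\tfrac34,1)$, while $c'\nmid k$ forces $c_1\ge2$ and hence $\ell\ne0$ by $(a',c')=1$; thus $x\in\{1/c_1,\dots,(c_1-1)/c_1\}\subseteq[1/c',\,1-1/c']$ is bounded away from $0$ and $1$. (For $c'=4$ both sums are in fact empty, since the parity condition and $c_1\ne4$ cannot hold simultaneously, so $A(\tfrac{a'}{4};n)=O(n^\varepsilon)$.) Combining these with the $k^{-1/2}$ weights and the fact that both sums contain only $O_{c'}(\sqrt n)$ terms, one obtains $A\!\left(\tfrac{a'}{c'};n\right)\ll_{c'}\sqrt n\,e^{\vartheta_{c'}\sqrt n}+O_{c'}(n^{\varepsilon})$ for some $\vartheta_{c'}<\pi$.

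It remains to handle the single term $[q^n]\,\O(-1;q)$, which lies outside the range $c>2$ of Theorem \ref{MainThm}; here the crude bound $|[q^n]\,\O(-1;q)|\le\overline p(n)$ is not enough. Running the circle-method argument underlying Theorem \ref{MainThm} with $c=2$, $a=1$ shows that the first sum in \eqref{thm} is empty and that every term of the second sum vanishes, since $\ell/c_1=\tfrac12$ forces $\delta_{2,k,r}=0$; hence $[q^n]\,\O(-1;q)=O(n^{\varepsilon})$ (alternatively one may cite the known evaluation of $\sum_m(-1)^m\overline N(m,n)$). Putting everything together, the sum over $j$ in the displayed identity is $O_c\!\bigl(\sqrt n\,e^{\vartheta\sqrt n}\bigr)$ with $\vartheta:=\max_{c'\mid c,\,c'\ge3}\vartheta_{c'}<\pi$, which is $o\bigl(e^{\pi\sqrt n}/n\bigr)=o(\overline p(n))$; therefore $\overline N(a,c,n)=\overline p(n)/c+o(\overline p(n))\sim\overline p(n)/c\sim\tfrac1c\cdot\tfrac{e^{\pi\sqrt n}}{8n}$. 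I expect the only genuine work to be the verification that $\delta_{c',k,r}<\tfrac1{16}$ holds uniformly — equivalently, that all the $\sinh$-rates in Theorem \ref{MainThm} are strictly smaller than the rate $\pi$ of $\overline p(n)$ — together with the minor bookkeeping for the reduced-modulus-$2$ term; the Kloosterman estimates and the counting of terms are entirely routine.
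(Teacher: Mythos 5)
Your proposal is correct and takes essentially the same route as the paper: the orthogonality identity \eqref{orthogonality} together with $\overline p(n)\sim e^{\pi\sqrt n}/(8n)$, Theorem \ref{MainThm} applied to the reduced fractions with $c'\ge 3$ plus the observation that every $\sinh$-rate there is bounded strictly below $\pi\sqrt n$ (your uniform bound $\delta_{c',k,r}\le(\tfrac14-\tfrac1{c'})^2$ is a clean variant of the paper's $k=1$ estimate $\delta_{c,k,r}\le\tfrac1{16}-\tfrac{c-2}{2c^2}$), and a separate treatment of the reduced-modulus-$2$ term. The one caveat is that Theorem \ref{MainThm} and the transformation laws behind it (Theorem \ref{Transformations}) are stated only for $c>2$, so you cannot literally rerun that argument at $c=2$, $a=1$; for the $\O(-1;q)$ term you should instead, as the paper does, invoke the separate transformation behavior from \cite[Corollary 4.2]{BJoy} (or the known evaluation of $\sum_m(-1)^m\overline N(m,n)$, which you already offer as an alternative) to see that its coefficients are $O(n^{\varepsilon})$.
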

\begin{rem}
A similar result for partition ranks was obtained recently by Males \cite{Males}.  
\end{rem}
\begin{rem}
A Rademacher-type convergent series expansion for $ \ol p(n) $ was found by Zuckerman \cite[p. 321, eq. (8.53)]{Zucker}, and is given by
\[\ol p(n)=\frac1{2\pi} \sum_{2\nmid k}\sqrt k \primesum\sum_h \frac{\omega^2_{h,k}}{\omega_{2h,k}}\cdot e^{-\frac{2\pi inh}{k}}\cdot \frac d{dn}\left( \frac1{\sqrt n}\sinh\left( \frac{\pi\sqrt n}{k}\right)  \right).\]
\end{rem}
The following inequalities were conjectured by Ji, Zhang and Zhao \cite[Conjecture 1.6 and Conjecture 1.7]{JZZ}, and Wei and Zhang \cite[Conjecture 5.10]{WZ}.
\begin{Con}[Ji--Zhang--Zhao, 2018]\label{Conj1}~
\begin{enumerate}[\rm(i)]
\item For $ n\ge0 $ and $ 1\le i\le 4, $ we have \[\overline N(0,10,5n+i)+\overline N(1,10,5n+i)\ge \overline N(4,10,5n+i)+\overline N(5,10,5n+i).\] 
\item For $ n\ge0, $ we have
\[\overline N(1,10,n)+\overline N(2,10,n)\ge \overline N(3,10,n)+\overline N(4,10,n).\] 
\end{enumerate}
\end{Con}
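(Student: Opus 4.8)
The plan is to deduce both parts of the conjecture from Theorem~\ref{MainThm} through the orthogonality identity~\eqref{orthogonality}, which reduces each inequality to the positivity of an explicit linear combination of the coefficients $A(j/10;\cdot)$, whose dominant asymptotics are then supplied by Theorem~\ref{MainThm}; the finitely many remaining small values are afterwards checked directly. For part~(ii) set
\[
E(n):=\overline N(1,10,n)+\overline N(2,10,n)-\overline N(3,10,n)-\overline N(4,10,n),
\]
and for part~(i) set, with $N=5n+i$,
\[
D(N):=\overline N(0,10,N)+\overline N(1,10,N)-\overline N(4,10,N)-\overline N(5,10,N).
\]
In each case two of the four terms enter with a plus sign and two with a minus sign, so the $\overline p(n)/10$ contribution in~\eqref{orthogonality} cancels; and since the rank symmetry $\overline N(m,n)=\overline N(-m,n)$ gives $A(j/10;\cdot)=A((10-j)/10;\cdot)$, folding the sum over $1\le j\le 9$ leaves a finite linear combination, with coefficients that are sums of tenth roots of unity, of the (real) quantities $A(1/10;\cdot),\,A(1/5;\cdot),\,A(3/10;\cdot),\,A(2/5;\cdot),\,A(1/2;\cdot)$.

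Next I would pin down the leading term. For $c=10$ the first sum in~\eqref{thm} is empty, since $10\mid k$ forces $k$ even, so only the second sum contributes. Running through the definitions of $k_1,c_1,\ell$ and $\delta_{c,k,r}$ for $a=1,\dots,5$ and for small $k$, one checks that $A(1/2;n)$ has no main term at all, being $O_{10}(n^{\varepsilon})$; that the $k=1$ term vanishes for $a\in\{3,4,5\}$, since one then lies in the middle range $\tfrac14<\ell/c_1\le\tfrac34$, where $\delta_{c,k,r}\equiv0$; and that the fastest-growing of the five coefficients is $A(1/10;n)$, whose leading piece is the $k=1,\,r=0$ term of Example~\ref{ex6}, namely $\tfrac{2}{\sqrt n}\tan(\tfrac{\pi}{10})\sinh(\tfrac{3\pi\sqrt n}{5})$; every other contribution---each $A(j/10;n)$ with $j\ne1$, together with all $k\ge2$ terms in the expansion of $A(1/10;n)$---is $O(e^{\pi\sqrt n/5})$. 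Consequently
\[
D(N)=\frac{5+\sqrt5}{5}\cdot\frac{\tan(\pi/10)}{\sqrt N}\,\sinh\!\left(\frac{3\pi\sqrt N}{5}\right)+O\!\left(e^{\pi\sqrt N/5}\right),
\]
and similarly $E(n)=\tfrac{2\sqrt5}{5}\cdot\tfrac{\tan(\pi/10)}{\sqrt n}\,\sinh\!\bigl(\tfrac{3\pi\sqrt n}{5}\bigr)+O\bigl(e^{\pi\sqrt n/5}\bigr)$, the positive leading coefficients $\tfrac{5+\sqrt5}{5}$ and $\tfrac{2\sqrt5}{5}$ arising from a brief root-of-unity computation (so that, in particular, no cancellation occurs at the leading order). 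Since $\tan(\pi/10)>0$, the displayed main terms are positive and tend to $+\infty$, whence $D(N)>0$ and $E(n)>0$ for all sufficiently large arguments. The remaining finitely many values are settled by computing $\overline N(a,10,n)$ directly from~\eqref{overpartsrank} (equivalently, from a truncation of $\sum_n\overline N(a,10,n)q^n$ via~\eqref{orthogonality}); for part~(i) only the small $N\not\equiv0\pmod*{5}$ need be examined.

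The main obstacle is effectivity. Theorem~\ref{MainThm} carries the ineffective error $O_c(n^{\varepsilon})$, whereas the finite-check step requires a concrete threshold, so the real work is to revisit the argument of Section~\ref{Proof of Theorem 1} and replace that error by an explicit function of $N$. Concretely, one bounds the Kloosterman-type sums $B_{a,c,k}$ and $D_{a,c,k}$ trivially---each summand has modulus controlled by the $\tan$, $\cot$ and $1/\sin$ factors together with $|\omega_{h,k}|=1$, so each such sum is $O_{10}(k)$---then estimates the total contribution of the ranges $k\ge2$ and $j\in\{2,3,4,5\}$, sums the resulting convergent series, and absorbs the polynomial tail from Theorem~\ref{MainThm}, thereby obtaining an explicit constant $C>0$ with
\[
\left|\,D(N)-\frac{5+\sqrt5}{5}\cdot\frac{\tan(\pi/10)}{\sqrt N}\,\sinh\!\left(\frac{3\pi\sqrt N}{5}\right)\right|\le C\,e^{\pi\sqrt N/5}
\]
and the analogue for $E$. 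Comparing the two sides then produces an explicit $N_0$ beyond which positivity is guaranteed, after which the finite verification for $N\le N_0$ completes the proof. I expect this bookkeeping, rather than any conceptual difficulty, to be the only genuinely laborious part.
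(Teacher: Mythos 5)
Your proposal is correct and follows essentially the same route as the paper: reduce the two rank differences, via roots of unity, to explicit linear combinations of the $A(j/10;n)$ (your orthogonality computation reproduces exactly the paper's reduction to $A(1/10;n)$ and $A(3/10;n)$ via its Lemma~\ref{Lemma:zetarelations10}, with matching leading coefficients $\tfrac{2\sqrt5}{5}$ and $\tfrac{5+\sqrt5}{5}$), isolate the dominant $k=1$ term of $A(1/10;n)$ from Theorem~\ref{MainThm}, make the error term explicit, and check the finitely many small cases numerically (the paper's threshold is $n\ge 1030$). The only cosmetic difference is that the paper treats part (i) by adding two separately proved inequalities rather than computing the combination for $D(N)$ directly.
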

\begin{Con}[Wei--Zhang, 2018]\label{Conj2} For $ n\ge11, $ we have 
\begin{equation}\label{Wei1}
\overline N(0,6,3n)\ge \overline N(1,6,3n)=\overline N(3,6,3n)\ge \overline N(2,6,3n),
\end{equation}
\begin{equation}\label{Wei2}
\overline N(0,6,3n+1)\ge \overline N(1,6,3n+1)=\overline N(3,6,3n+1)\ge \overline N(2,6,3n+1),
\end{equation}
\begin{equation}\label{Wei3}
\overline N(1,6,3n+2)\ge \overline N(2,6,3n+2)\ge\overline N(0,6,3n+2)\ge\overline N(3,6,3n+2).
\end{equation}	
\end{Con}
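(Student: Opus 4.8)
We sketch how Conjectures \ref{Conj1} and \ref{Conj2} will follow from Theorem \ref{MainThm}; both obey the same scheme. Starting from \eqref{orthogonality} we write, for $0\le a<c$,
\[\ol N(a,c,n)=\frac{\ol p(n)}{c}+\frac1c\sum_{j=1}^{c-1}\zeta_c^{-aj}\,A_j(n),\qquad A_j(n):=[q^n]\,\O(\zeta_c^j;q),\]
and observe that in each of the pairwise differences occurring in \eqref{Wei1}--\eqref{Wei3} and in Conjecture \ref{Conj1} the coefficients of $\ol p(n)$ cancel, so that what must be signed is a fixed $\Q(\zeta_c)$-linear combination of the $A_j(n)$. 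Using $\O(\zeta_c^j;q)=\O(\zeta_{c/d}^{j/d};q)$ with $d=\gcd(j,c)$, each $A_j(n)$ is either $[q^n]$ of a rank generating function at a primitive $c'$-th root of unity with $c'>2$, hence given by Theorem \ref{MainThm}, or equals $[q^n]\O(-1;q)$. This last function is not covered by Theorem \ref{MainThm} and must be treated separately by the circle method; its coefficient vanishes identically in the combinations relevant to Conjecture \ref{Conj1}, so it intervenes only for $c=6$.

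For each such rank generating function at a primitive $c'$-th root of unity we isolate in \eqref{thm} the single summand -- one value of $k$ in the first sum, or one pair $(k,r)$ in the second -- carrying the largest exponential rate, namely $\tfrac\pi k$ or $\tfrac{4\pi}{k}\sqrt{\delta_{c',k,r}}$; everything else, together with the $O_{c'}(n^\varepsilon)$ term, is absorbed into an error $O(e^{\beta\sqrt n})$ with $\beta$ strictly below that rate, and the relevant Kloosterman sum collapses to one explicit term when $k=1$. For $c=10$ this is exactly the computation of Example \ref{ex6}: the maximal rate $\tfrac{3\pi}{5}$ is attained only for $j\in\{1,9\}$ (for $j=3,7$ the term $(k,r)=(1,0)$ has $\delta=0$, and the best available rate is merely $\tfrac\pi5$), so that $\ol N(a,10,n)=\tfrac{\ol p(n)}{10}+\tfrac15\cos(\pi a/5)\,A(1/10;n)+O(e^{\beta\sqrt n})$, where $A(1/10;n)\sim\tfrac2{\sqrt n}\tan(\pi/10)\sinh(3\pi\sqrt n/5)$; each target difference then equals a constant multiple of $n^{-1/2}\sinh(3\pi\sqrt n/5)$ times a short linear combination of cosines of multiples of $\pi/5$, whose sign one checks by hand to be strictly positive. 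Since this leading term does not depend on $n$ modulo $5$, each inequality holds for all large $n$, in particular for all large $n\equiv i\pmod*5$ with $1\le i\le4$. For $c=6$ one argues the same way, except that the surviving pieces $A(1/6;n),A(5/6;n),A(1/3;n),A(2/3;n)$ all carry the common rate $\tfrac\pi3$, so their explicit leading coefficients must be assembled, and, in addition, the analogous expansion of $[q^n]\O(-1;q)$ supplied by a direct circle-method computation; because the contribution of the characters modulo $3$ has a leading coefficient depending on $n$ modulo $3$, this is precisely why \eqref{Wei1}--\eqref{Wei3} are stated separately for $3n$, $3n+1$ and $3n+2$, and evaluating the resulting real constant in each residue class and for each inequality decides its sign. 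When a leading coefficient vanishes, we pass to the next rate in the expansion and repeat; the equalities $\ol N(1,6,\cdot)=\ol N(3,6,\cdot)$ appearing in \eqref{Wei1}--\eqref{Wei2} are genuine $q$-series identities, for which we invoke those of \cite{JZZ}.

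Finally, the above yields each inequality only for $n\ge N_0$; for $0\le n<N_0$ we verify it directly from \eqref{overpartsrank} and \eqref{orthogonality} by computing enough coefficients. The chief obstacle will be effectivity: to make $N_0$ small enough for a finite check one must bound, with explicit constants, the whole tail of both sums in \eqref{thm} -- all the subleading $k$ and all the subleading pairs $(k,r)$ -- as well as the term hidden in $O_c(n^\varepsilon)$, which means making the estimates behind Theorem \ref{MainThm} quantitative; a secondary difficulty lies in the degenerate cases where leading coefficients cancel, where one must confirm that the cancellation is exact and then control the genuinely next term of the expansion.
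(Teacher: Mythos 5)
Your plan is essentially the paper's own: reduce each difference via the orthogonality relation \eqref{orthogonality} to a fixed linear combination of the $A(j/6;n)$ (plus the coefficients of $\O(-1;q)$, which the paper disposes of by noting they are only $O(n^{\varepsilon})$), extract the dominant $\sinh(\pi\sqrt{n}/3)$ terms from Theorem \ref{MainThm} --- whose sign depends on $n$ modulo $3$ through $A(1/3;n)$ --- quote \cite{JZZ} for the equalities $\ol N(1,6,\cdot)=\ol N(3,6,\cdot)$, make the error bounds explicit, and check small $n$ numerically. The only cosmetic difference is that the paper also recycles the previously proved inequalities of \cite{JZZ} (Remark \ref{remarkmod6}) to shorten the list of differences it must sign, rather than assembling every pairwise leading coefficient from scratch as you propose.
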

As an application of Theorem \ref{MainThm}, we prove these conjectures and, in fact, a bit more.
\begin{Thm}\label{Thm2}
For $ n\ge0, $ we have
\begin{equation}\label{eq:1234}
\overline N(1,10,n)+\overline N(2,10,n)\ge \overline N(3,10,n)+\overline N(4,10,n),
\end{equation}
\begin{equation}\label{eq:0325}
\overline N(0,10,n)+\overline N(3,10,n)\ge \overline N(2,10,n)+\overline N(5,10,n),
\end{equation}
\begin{equation}\label{eq:0145}
\overline N(0,10,n)+\overline N(1,10,n)\ge \overline N(4,10,n)+\overline N(5,10,n).
\end{equation}
\end{Thm}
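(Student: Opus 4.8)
The plan is to convert each inequality into a statement about the coefficients $A\!\left(\frac{j}{10};n\right)$ of $\mathcal{O}(\zeta_{10}^j;q)$ and then to extract the dominant term from Theorem~\ref{MainThm}. By the orthogonality of roots of unity, \eqref{orthogonality} gives, for every $n\ge1$ and any residues $a_1,a_2,a_3,a_4$,
\[
\ol N(a_1,10,n)+\ol N(a_2,10,n)-\ol N(a_3,10,n)-\ol N(a_4,10,n)=\frac1{10}\sum_{j=1}^{9}c_j\,A\!\left(\frac{j}{10};n\right),\qquad c_j:=\zeta_{10}^{-a_1j}+\zeta_{10}^{-a_2j}-\zeta_{10}^{-a_3j}-\zeta_{10}^{-a_4j},
\]
and the four copies of $\ol p(n)/10$ cancel — which is the crucial point, since the $A$-coefficients are of far smaller order than $\ol p(n)$. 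The rank symmetry $\ol N(m,n)=\ol N(-m,n)$ makes $\mathcal{O}(u;q)$ invariant under $u\mapsto u^{-1}$, so that $A\!\left(\frac{j}{10};n\right)=A\!\left(\frac{10-j}{10};n\right)\in\R$; combined with $c_{10-j}=\overline{c_j}$ this shows the above sum is real, equal to $\frac1{10}\sum_{j=1}^{4}2\RE(c_j)\,A\!\left(\frac{j}{10};n\right)+\frac{c_5}{10}\,A\!\left(\frac12;n\right)$.

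The next step is to locate the dominant term. For $j\in\{1,9\}$ Theorem~\ref{MainThm} gives the growth rate $\frac{3\pi}{5}$: carrying out the computation precisely as in Example~\ref{ex6}, one gets $A\!\left(\frac1{10};n\right)=\frac2{\sqrt n}\tan\!\left(\frac\pi{10}\right)\sinh\!\left(\frac{3\pi}5\sqrt n\right)+O\!\left(e^{\pi\sqrt n/15}\right)$, the error swallowing the lower-order $D$-terms (which first reappear at $k=9$) together with the $O(n^{\varepsilon})$ tail. For $j\in\{2,4,6,8\}$ one has $\zeta_{10}^j=\zeta_5^{j/2}$, so $A\!\left(\frac{j}{10};n\right)$ is controlled by Theorem~\ref{MainThm} with $c=5$, and a direct inspection of \eqref{delta} together with the $k=5$ term of the first sum there shows the largest admissible rate is $\frac\pi5$; for $j\in\{3,7\}$ the largest admissible value of $\delta_{10,k,r}$ occurs at $k=3$ and again gives rate $\frac\pi5$. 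Hence $A\!\left(\frac{j}{10};n\right)=O\!\left(e^{\pi\sqrt n/5}\right)$ for every $j\notin\{1,9\}$. A short computation moreover shows $c_5=0$ in each of \eqref{eq:1234}, \eqref{eq:0325}, \eqref{eq:0145}, so the coefficient $A\!\left(\frac12;n\right)$ of $\mathcal{O}(-1;q)$ — the single case excluded from Theorem~\ref{MainThm} — never enters. Collecting everything,
\[
\ol N(a_1,10,n)+\ol N(a_2,10,n)-\ol N(a_3,10,n)-\ol N(a_4,10,n)=\frac{\RE(c_1)}{5}\cdot\frac2{\sqrt n}\tan\!\left(\frac\pi{10}\right)\sinh\!\left(\frac{3\pi}5\sqrt n\right)+O\!\left(e^{\pi\sqrt n/5}\right).
\]

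It then remains to check that $\RE(c_1)>0$ in each case. Using $\cos\frac\pi5=\frac{1+\sqrt5}{4}$, $\cos\frac{2\pi}5=\frac{\sqrt5-1}{4}$, $\cos\frac{3\pi}5=-\cos\frac{2\pi}5$ and $\cos\frac{4\pi}5=-\cos\frac\pi5$, I would get $\RE(c_1)=2\!\left(\cos\frac\pi5+\cos\frac{2\pi}5\right)=\sqrt5$ for \eqref{eq:1234}, $\RE(c_1)=2-2\cos\frac{2\pi}5=\frac{5-\sqrt5}{2}$ for \eqref{eq:0325}, and $\RE(c_1)=2+2\cos\frac\pi5=\frac{5+\sqrt5}{2}$ for \eqref{eq:0145} — all strictly positive. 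Since $e^{3\pi\sqrt n/5}/\sqrt n$ overwhelms $e^{\pi\sqrt n/5}$, each difference is then positive for all sufficiently large $n$. What remains is to make the $O$-terms effective: bounding each subordinate $D$-sum trivially by $O(k)$, summing over $k\le\sqrt n$, and keeping the implied constant in Theorem~\ref{MainThm} explicit will produce a concrete threshold $N_0$, after which the finitely many values $0\le n<N_0$ are dispatched by computing $\ol N(a,10,n)$ directly from \eqref{overpartsrank}. The hardest part will be exactly this effectivization — tracking the constants through Theorem~\ref{MainThm} and the lower-order Kloosterman sums tightly enough that $N_0$ is small enough to verify by machine; the generous ratio between the leading rate $\frac{3\pi}5$ and the next one $\frac\pi5$ makes this manageable, but it is the most delicate bookkeeping in the argument.
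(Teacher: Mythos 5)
Your argument is correct and leads to the same analytic core as the paper --- the dominant term $\frac{2}{\sqrt n}\tan\left(\frac{\pi}{10}\right)\sinh\left(\frac{3\pi\sqrt n}{5}\right)$ of $A\left(\frac1{10};n\right)$ from Theorem \ref{MainThm}, an explicit bound on everything growing at rate at most $\frac{\pi\sqrt n}{5}$, and a finite numerical check --- but your reduction step is genuinely different. The paper does not use the raw orthogonality relation \eqref{orthogonality}; instead it proves Lemma \ref{Lemma:zetarelations10} (a generalization of Ji--Zhang--Zhao's Lemma 3.1, resting on $\ol N(m,n)=\ol N(-m,n)$ and the relation $1-\zeta_{10}^a+\zeta_{10}^{2a}-\zeta_{10}^{3a}+\zeta_{10}^{4a}=0$), which expresses $\O(\zeta_{10}^a;q)$ for $a=1,3$ exactly in terms of the two relevant rank-difference series. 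Subtracting and combining the $a=1$ and $a=3$ cases then turns \eqref{eq:1234} into the exact statement $A\left(\frac1{10};n\right)\ge A\left(\frac3{10};n\right)$, \eqref{eq:0325} into $A\left(\frac1{10};n\right)+\left(1+2\cos\frac\pi5\right)A\left(\frac3{10};n\right)\ge0$, and \eqref{eq:0145} follows by adding; only the two coefficients $A\left(\frac1{10};n\right)$, $A\left(\frac3{10};n\right)$ ever appear. Your route keeps all nine roots of unity, so you must additionally bound $A\left(\frac15;n\right)$ and $A\left(\frac25;n\right)$ via Theorem \ref{MainThm} with $c=5$ (your inspection of \eqref{delta} giving rate $\frac{\pi\sqrt n}{5}$ there is right, $\delta\le\frac1{400}$), and you need the observation that $c_5=0$ in all three cases to avoid the excluded $c=2$ series $\O(-1;q)$ --- a nice point, and your values $\RE(c_1)=\sqrt5,\ \frac{5-\sqrt5}2,\ \frac{5+\sqrt5}2$ are consistent with the paper's reduction. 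What the paper's lemma buys is lighter bookkeeping in the effectivization (no $c=5$ Kloosterman sums to control) and an exact, all-$n$ reformulation; what your version buys is mechanical generality: it applies verbatim to any combination of $\ol N(a,10,n)$ whose $\ol p(n)$-parts and $c_5$-part cancel. Two small points of hygiene: your $O\left(e^{\pi\sqrt n/15}\right)$ and $O\left(e^{\pi\sqrt n/5}\right)$ error terms should carry polynomial factors in $n$ (from summing trivially bounded Kloosterman sums over $k\le\sqrt n$), which is harmless for the asymptotics but matters when you extract the explicit threshold; and the paper's explicit constants lead to the threshold $n\ge1030$ with the range below checked in Mathematica, which is exactly the effectivization-plus-finite-check you anticipate as the main remaining labor.
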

\begin{Thm}\label{Thm3}
For $ n\ge0, $ we have
\begin{equation}\label{eq:0123}
\overline N(0,6,n)+\overline N(1,6,n)\ge \overline N(2,6,n)+\overline N(3,6,n),
\end{equation}
\begin{equation}\label{eq:0312}
\overline N(0,6,3n)+\overline N(3,6,3n)\ge \overline N(1,6,3n)+\overline N(2,6,3n),
\end{equation}
\begin{equation}\label{eq:0312prime}
\overline N(0,6,3n+1)+\overline N(3,6,3n+1)\ge \overline N(1,6,3n+1)+\overline N(2,6,3n+1),
\end{equation}
\begin{equation}\label{eq:0312primeprime}
\overline N(0,6,3n+2)+\overline N(3,6,3n+2)\le \overline N(1,6,3n+2)+\overline N(2,6,3n+2),
\end{equation}
\begin{equation}\label{SolvedWei11}
	\overline N(0,3,3n)\ge \overline N(1,3,3n)=\overline N(2,3,3n),
\end{equation}
\begin{equation}\label{SolvedWei22}
	\overline N(0,3,3n+1)\ge \overline N(1,3,3n+1)=\overline N(2,3,3n+1),
\end{equation}
\begin{equation}\label{SolvedWei33}
	\overline N(0,3,3n+2)\le \overline N(1,3,3n+2)=\overline N(2,3,3n+2).
\end{equation}
\end{Thm}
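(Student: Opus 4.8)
The plan is to convert each inequality in the theorem into a statement about the sign of a single Fourier coefficient of a root-of-unity specialization of $\mathcal{O}(u;q)$, and then to extract that sign from the asymptotic expansion~\eqref{thm}. The equalities in~\eqref{SolvedWei11}--\eqref{SolvedWei33} require nothing new: conjugating an overpartition leaves its rank unchanged, so $\overline N(m,n)=\overline N(-m,n)$, whence $\overline N(a,c,n)=\overline N(c-a,c,n)$, and in particular $\overline N(1,3,m)=\overline N(2,3,m)$ for all $m\ge 0$.

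For the inequalities I would start from the orthogonality relation~\eqref{orthogonality}. In each of~\eqref{eq:0123}--\eqref{SolvedWei33} the two sides involve equally many summands $\overline N(a,c,\cdot)$, so the $\overline p(n)/c$ contribution cancels and the difference of the two sides becomes a linear combination of the coefficients $[q^m]\,\mathcal{O}(\zeta_c^j;q)$, $1\le j\le c-1$. Grouping the $j$-th and $(c-j)$-th terms and using $\mathcal{O}(\zeta_c^j;q)=\mathcal{O}(\zeta_c^{-j};q)$ (again the conjugation symmetry), which also forces each such coefficient to be real, reduces matters to a short root-of-unity computation. For $c=6$ its outcome is: the $j=3$ term---which would involve $\mathcal{O}(-1;q)$, not covered by Theorem~\ref{MainThm} since there $c>2$---occurs with coefficient $0$ and drops out; the difference of the two sides of~\eqref{eq:0123} equals $A(\tfrac16;n)$; and the difference of the two sides of~\eqref{eq:0312}, \eqref{eq:0312prime}, \eqref{eq:0312primeprime} equals $A(\tfrac13;m)$ with $m=3n,\,3n+1,\,3n+2$ respectively. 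By the same relation, $\overline N(0,3,m)-\overline N(1,3,m)=A(\tfrac13;m)$, so~\eqref{SolvedWei11}--\eqref{SolvedWei33} reduce to the sign of $A(\tfrac13;m)$ as well. Hence it is enough to prove: $A(\tfrac16;n)\ge 0$ for every $n\ge 0$, and $A(\tfrac13;m)\ge 0$ for $m\not\equiv 2\pmod 3$ while $A(\tfrac13;m)\le 0$ for $m\equiv 2\pmod 3$.

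Both of these I would deduce from Theorem~\ref{MainThm}. For $A(\tfrac13;m)$, as in the $c=3$ example following that theorem, the second sum in~\eqref{thm} vanishes identically (every $\delta_{3,k,r}=0$) and the first sum is dominated by its $k=3$ term; since the Kloosterman sum $B_{1,3,3}(-m,0)$ depends only on $m$ modulo $3$, evaluating it gives $A(\tfrac13;m)=\kappa(m)\,m^{-1/2}\sinh(\pi\sqrt m/3)+O(m^\varepsilon)$ with $\kappa(m)>0$ when $m\not\equiv 2\pmod 3$ and $\kappa(m)<0$ when $m\equiv 2\pmod 3$. For $A(\tfrac16;n)$ the first sum in~\eqref{thm} is empty, as $6$ has no odd multiple; in the second sum one checks from~\eqref{delta} that $\delta_{6,k,0}=\tfrac1{144}$ for every $k$ coprime to $6$, and that no other admissible pair $(k,r)$ (none with $2\mid k$ or $3\mid k$, and none with $r\ge1$) produces a positive $\delta$, so the exponential rate $4\pi\sqrt{\delta_{6,k,0}}/k=\pi/(3k)$ is maximal at $k=1$; for $k=1$, $r=0$ one has $m_{1,6,1,0}=0$ and, from the definition of $D$, $D_{1,6,1}(-n,0)=\tfrac1{\sqrt6}>0$, hence $A(\tfrac16;n)=\tfrac{2}{\sqrt{3n}}\sinh(\pi\sqrt n/3)+O(n^\varepsilon)$. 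In each case the error term is of smaller order than the main term, so the asserted sign holds once the argument is sufficiently large.

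It then remains to verify the finitely many remaining cases directly---computing the coefficients of $\mathcal{O}(\zeta_c^j;q)$ from~\eqref{overpartsrank}, or from a table of $\overline N(m,n)$, and checking each inequality, the cases $n=0$ being trivial. The main obstacle, and the only genuinely delicate point, is making Theorem~\ref{MainThm} effective: it is stated only with an $O_c(n^\varepsilon)$ error, so to pin down the threshold $n_0$ above which the main term dictates the sign one must reopen its circle-method proof and bound both the subdominant $k$ (those with $k\ge 3$ when $c=3$, resp.\ $k\ge 5$ when $c=6$) and the Farey-arc remainders explicitly. This is standard but laborious bookkeeping; once $n_0$ is known, the residual finite check completes the proof.
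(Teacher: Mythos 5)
Your proposal is correct and follows essentially the same route as the paper: reduce the inequalities, via roots of unity (with the $\mathcal{O}(-1;q)$ contribution dropping out), to the nonnegativity of $A\left(\frac16;n\right)$ and to the sign of $A\left(\frac13;m\right)$ according to $m$ modulo $3$, identify the dominant term supplied by Theorem \ref{MainThm} in each case, and finish with explicit error bounds plus a finite numerical check. One reassurance: the signs you assert for $A\left(\frac13;m\right)$ --- nonnegative for $m\equiv0,1\pmod{3}$ and nonpositive for $m\equiv2\pmod{3}$ --- are the correct ones (they agree with the theorem being proved and with direct expansion of $\mathcal{O}(\zeta_3;q)$, whose initial coefficients are $1,2,-2,2,2,-6,4,4,-8,4,4,\dots$), even though they contradict Example 1 of the paper, which misassigns the residue classes (the correct grouping is $\{0,1\}$ positive versus $\{2\}$ negative, not $\{1\}$ versus $\{0,2\}$).
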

\begin{Thm}\label{Thm4}
	For $ n\ge11, $ we have
	\begin{equation}\label{SolvedWei1}
	\overline N(0,6,3n)\ge \overline N(1,6,3n)\ge \overline N(2,6,3n),
	\end{equation}
	\begin{equation}\label{SolvedWei2}
	\overline N(0,6,3n+1)\ge \overline N(1,6,3n+1)\ge \overline N(2,6,3n+1),
	\end{equation}
	\begin{equation}\label{SolvedWei3}
	\overline N(1,6,3n+2)\ge\overline N(2,6,3n+2)\ge\overline N(0,6,3n+2)\ge\overline N(3,6,3n+2).
	\end{equation}
	
\end{Thm}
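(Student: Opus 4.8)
\textbf{Proof strategy for Theorem \ref{Thm4}.}
The plan is to deduce all three chains from the asymptotic formula of Theorem \ref{MainThm} together with the orthogonality relation \eqref{orthogonality}, reducing each inequality $ \ol N(a_1,6,N)\ge \ol N(a_2,6,N) $ to a statement about a finite sum of terms $ A(j/6;N) $ with $ 1\le j\le 5 $. Writing $ \ol N(a,6,N)=\tfrac16\ol p(N)+\tfrac16\sum_{j=1}^{5}\zeta_6^{-aj}A(j/6;N) $ and taking differences, the common main term $ \ol p(N)/6 $ cancels, so each claimed inequality is equivalent to the positivity (for $ N $ in the relevant residue class mod $ 3 $, and $ N\ge 33 $) of an explicit real linear combination $ \sum_{j=1}^{5}c_j\,A(j/6;N) $. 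Since $ A(j/6;N)=\overline{A((6-j)/6;N)} $, each such combination is real, and the first step is to identify, for each fixed residue class of $ N $ modulo the relevant modulus, \emph{which} Kloosterman term in \eqref{thm} dominates: for $ c=6 $ one has $ c_1\in\{6,3,2\}$ depending on $ (6,k) $, and one must track the quantities $ \delta_{6,k,r} $ and $ m_{6,k,r} $ to see that a single value of $ k $ (the one minimizing $ 1/\big(k\sqrt{\delta_{c,k,r}}\big) $, typically $ k=1 $ or $ k=2 $) produces a term of order $ e^{c_0\sqrt N} $ that beats every other contribution, all of which are $ O(n^{\varepsilon}) $ or $ e^{c_1\sqrt N} $ with $ c_1<c_0 $.

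Next I would carry out the explicit evaluation of the dominant $ D_{6,k,r} $ (or $ B_{6,k} $) coefficients. As in Examples 2 and 3, for $ k=1 $ the Kloosterman sum degenerates to a single term and one gets, up to the sign $ \pm\tfrac1{\sqrt2}\tan(\pi a/6) $ fixed by the sign rule in the definition of $ D_{a,c,k} $ and by $ s(\ell/c_1,\cdot) $, a clean closed form; for $ k=2,3,6 $ one evaluates the short character sums over $ h $ coprime to $ k $ using the tabulated Dedekind sums and the values of $ \omega_{h,k} $ from \cite[p.~62]{Apostol}. Substituting these into \eqref{thm} and then into the difference $ \sum_j c_j A(j/6;N) $, the leading exponential factors $ \sinh(4\pi\sqrt{\delta N}/k) $ combine; because $ \tan(\pi/6)=1/\sqrt3 $ and $ \tan(2\pi/6)=\sqrt3 $ are the only tangents that appear, the sign of the whole difference is governed by a small explicit trigonometric constant, whose positivity (or negativity, for the reversed inequalities \eqref{SolvedWei3} and \eqref{eq:0312primeprime}-type statements) is checked by hand. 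The chains \eqref{SolvedWei1}--\eqref{SolvedWei3} then follow by transitivity once each adjacent two-term inequality is established; note \eqref{SolvedWei1}, \eqref{SolvedWei2} are weakenings of \eqref{Wei1}, \eqref{Wei2} (the equalities $ \ol N(1,6,\cdot)=\ol N(3,6,\cdot)$ on those progressions are handled separately, or simply not needed here), while \eqref{SolvedWei3} reproves \eqref{Wei3} verbatim.

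Finally, the passage from ``asymptotically'' to ``for all $ n\ge 11 $'' is the standard but delicate endgame: Theorem \ref{MainThm} gives an \emph{effective} error term $ O_c(n^\varepsilon) $, so by making the implied constant explicit (tracking it through the circle-method estimates referenced from \cite{B} and \cite{BJoy}) one obtains an explicit $ N_0 $ beyond which the sign of $ \sum_j c_j A(j/6;N) $ is forced by its main term; then one verifies the finitely many remaining cases $ 11\le n< N_0 $ by direct computation of $ \ol N(a,6,n) $ from the generating function \eqref{overpartsrank}. I expect the main obstacle to be precisely this effectivization: one must bound the non-dominant Kloosterman sums and the tail $ k>\sqrt n $ with fully explicit constants, uniformly over the at most $ \varphi(6)=2 $ relevant values of $ a $, so that $ N_0 $ is small enough to make the residual finite check feasible. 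A secondary subtlety is bookkeeping the sign conventions: the value of $ s(b,c) $ and $ t(b,c) $, the parity assumption on $ h' $ flagged in the Remark after Theorem \ref{MainThm}, and the case split in \eqref{delta}--\eqref{mackr} according to the position of $ \ell/c_1 $ in $ (0,\tfrac14],(\tfrac14,\tfrac34],(\tfrac34,1) $ must all be applied consistently, since a single sign error flips an inequality.
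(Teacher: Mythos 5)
Your overall strategy is the same as the paper's: reduce each inequality via the orthogonality identity \eqref{orthogonality} (the paper uses the explicit versions \eqref{Mao1}--\eqref{Mao4}) to a sign statement about a real linear combination of the coefficients of $\O(\zeta_6^j;q)$, determine the sign from Theorem \ref{MainThm}, make the $O_c(n^{\varepsilon})$ error explicit as in the computations at the end of Section \ref{Ineqs}, and check the finitely many remaining $n$ by computer. The one structural difference is that the paper first invokes the known Ji--Zhang--Zhao identities and inequalities (Remark \ref{remarkmod6}) to shrink the problem to four two-term inequalities, which then become $A\left(\frac16;n\right)\ge0$ and $A\left(\frac16;3n+i\right)+3A\left(\frac13;3n+i\right)\ge0$ for $i=0,1$ (reversed for $i=2$); your plan of proving every adjacent link of the chains directly is equally viable, since each link corresponds to a combination whose leading coefficient turns out to be nonzero, and it has the merit of not leaning on \cite{JZZ}.

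Two concrete points in your proposal need repair. First, your combinations $\sum_{j=1}^5 c_jA(j/6;N)$ necessarily contain the $j=3$ term, i.e.\ the coefficients of $\O(-1;q)$, and Theorem \ref{MainThm} (like Theorem \ref{Transformations}) is stated only for $c>2$, so you cannot feed this term into \eqref{thm}; the paper disposes of it by a separate argument, citing \cite[Corollary 4.2]{BJoy} and rerunning the circle-method estimates to show these coefficients are $O(n^{\varepsilon})$, and without some such bound the combinations are not controlled. Second, your claim that a single value of $k$ yields a term beating every other contribution of strictly smaller exponential order fails exactly for the links comparing $\ol N(0,6,\cdot)$ with $\ol N(1,6,\cdot)$ (and $\ol N(2,6,3n+2)$ with $\ol N(0,6,3n+2)$): there both $A\left(\frac16;N\right)$, coming from $c=6$, $k=1$ with $\delta_{6,1,0}=\frac1{144}$, and $A\left(\frac13;N\right)$, coming from $c=3$, $k=3$ in the \emph{first} sum of \eqref{thm}, contribute terms of the same order $n^{-1/2}e^{\pi\sqrt N/3}$, so the sign is decided by the net coefficient, which depends on $N$ modulo $3$ through the Kloosterman sum $B_{1,3,3}(-N,0)$; this evaluation (carefully separating $N\equiv0$ from $N\equiv2\pmod 3$, with the correct $\omega_{h,3}$ and the even-$h'$ convention) cannot be avoided. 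Also note that both sums in \eqref{thm} require $2\nmid k$, so your proposed evaluations at $k=2$ and $k=6$ never arise. With these points fixed, your effective-constant endgame and finite numerical check are exactly what the paper does.
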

%

\begin{rem}Similar identities and inequalities were studied, for instance, by Alwaise, Iannuzzi and Swisher \cite{AIS}, Bringmann \cite{B}, and Mao \cite{Mao} for ranks of partitions, and  by Jennings-Shaffer and Reihill \cite{CJSREIH}, and Mao \cite{Mao2} for $ M_2 $-ranks of partitions without repeated odd parts. By establishing identities for the overpartition rank generating functions evaluated at roots of unity analogous to those found in \cite[pp.~35--38]{CJSREIH} for the $ M_2 $-rank, the reader can come up with many other such inequalities. \end{rem}
\begin{rem}
	Ji, Zhang and Zhao \cite{JZZ} proved \eqref{eq:0145} for $ n\equiv0\pmod*5, $ whereas the inequality \eqref{eq:0123} is new. 
\end{rem}
\begin{rem}\label{remarkmod6} The identities from \eqref{Wei1} and \eqref{Wei2} were proved by Ji, Zhang and Zhao \cite{JZZ}, who further proved that $N(0,6,3n)>\ol N(2,6,3n)$ for $ n\ge1, $ and  $ \ol N(0,6,3n+1)>\ol N(2,6,3n+1) $ for $ n\ge 0. $
While \eqref{eq:0123} follows easily now for $ n\equiv0,1\pmod*3, $ the inequality  is not at all clear for $ n\equiv2\pmod*3 $,  as the same authors also showed that $ \ol N(0,6,3n+2)<\ol N(2,6,3n+2)$ for $ n\ge1 $  and $ \ol N(1,6,3n+2)>\ol N(3,6,3n+2)$ for $ n\ge0. $ For a list of the identities and inequalities already proven, see \cite[Theorem 1.4]{JZZ}. 
\end{rem}
\begin{rem}
The identity and inequalities from \eqref{Wei1} were also proved by Wei and Zhang \cite[p.~25]{WZ}.
\end{rem}


\section{Strategy of the proof}\label{Strategy}
For the reader's benefit, we outline the main steps in proving Theorem \ref{MainThm}, along with several other estimates that will be used in what follows.
\subsection{Circle method}\label{SubsectionCircleMethod} The main idea of our approach is the Hardy-Ramanujan circle method. By Cauchy's Theorem we have, for $ n>0, $
\[A\left(\frac ac;n \right)=\frac1{2\pi i}\int_{\cal C}\frac{\O\left(\frac ac;q \right)}{q^{n+1}}dq,\]
where $\cal C $ may be taken to be the circle of radius $ e^{-\frac{2\pi}{n}} $ parametrized by $ q=e^{-\frac{2\pi}{n}+2\pi it} $ with $ t\in[0,1], $ in which case we obtain
\[A\left(\frac ac;n \right)=\int_0^1\O\left(\frac ac;e^{-\frac{2\pi}{n}+2\pi it} \right)\cdot e^{2\pi-2\pi int}dt.\]
If $ \frac{h_1}{k_1}<\frac hk<\frac{h_2}{k_2} $ are adjacent Farey fractions in the Farey sequence of order $ N:=\lfloor n^{1/2}\rfloor, $ we put \[\vartheta_{h,k}':=\frac1{k(k_1+k)}\quad\text{and}\quad\vartheta_{h,k}'':=\frac1{k(k_2+k)}.\]
Splitting the path of integration along the Farey arcs $ -\vartheta_{h,k}'\le\Phi\le \vartheta_{h,k}'', $ where $ \Phi:=t-\frac hk $ and $ 0\le h< k\le N $ with $(h,k)=1, $ we have
\begin{equation}\label{CircleFarey}
A\left(\frac ac;n \right)=\sum_{h,k}e^{-\frac{2\pi inh}{k}}\int_{-\vartheta_{h,k}'}^{\vartheta_{h,k}''} \cal O\left(\frac ac; e^{\frac{2\pi i}{k}(h+iz)}\right)\cdot e^{\frac{2\pi nz}{k}}  d\Phi,  \end{equation}
where $ z=\frac kn-k\Phi i. $
\par The reader familiar with some basics from Farey theory might immediately recognize the inequality 
\begin{equation*}
\frac1{k+k_j}\le\frac1{N+1}
\end{equation*}
for $ j=1,2, $ together with several other known facts (which are otherwise very easy to prove) such as
\begin{equation*}
{\rm Re}(z)=\frac kn,\quad  {\rm Re}\left( \frac 1z\right)>\frac k2,\quad |z|^{-\frac12}\le n^{\frac12}\cdot k^{-\frac12} \quad\text{and}\quad \vartheta_{h,k}'+\vartheta_{h,k}''\le \frac{2}{k(N+1)}.
\end{equation*}
\par For a nice introduction to Farey fractions, one can consult \cite[Chapter 5.4]{Apo}.
\subsection{Modular transformation laws}
Our next step in the proof of Theorem \ref{MainThm} requires the modular transformations\footnote{In passing, we correct the definitions of $ \U (a,b,c;q) $ and $ \V(a,b,c;q), $ as some  misprints occurred in their original expressions from \cite[p.~8]{BJoy}. The necessary changes become clear on consulting the proof, see \cite[pp.~11--17]{BJoy}.} for $ \O\left(\frac ac;q \right)  $ established by Bringmann and Lovejoy \cite{BJoy}, the proof of which can be found in \cite[pp.~11--17]{BJoy}. For $ 0<a<c $ coprime with $ c>2, $ and $ s=s(b,c) $ and  $ t=t(b,c) $ as in Section \ref{Notation}, let

\begin{align*}\U\left(\frac ac;q \right)=\U\left(\frac ac;z \right)&:=\frac{\eta\left(\frac z2 \right) }{\eta^2(z)}\sin\left(\frac{\pi a}{c} \right)\sum_{n\in\Z}\frac{(1+q^n)q^{n^2+\frac n2}}{1-2\cos\left(\frac{2\pi a}{c} \right)q^n+q^{2n} },   \\
\U(a,b,c;q)=\U(a,b,c;z)&:=\frac{\eta\left(\frac z2 \right) }{\eta^2(z)} e^{\frac{\pi ia}{c}\left(\frac{4b}{c}-1-2s \right)}q^{\frac{sb}{c}+\frac{b}{2c}-\frac{b^2}{c^2}}\sum_{m\in\Z}\frac{q^{\frac{m}{2}(2m+1)+ms}}{1-e^{-\frac{2\pi ia}{c}}q^{m+\frac bc}},\\
\cal V\left(a,b,c; q\right)=\V(a,b,c;z)&:=\frac{\eta\left(\frac z2 \right) }{\eta^2(z)}e^{\frac{\pi i a}{c}\left(\frac{4b}{c}-1-2s \right) }q^{\frac{sb}{c}+\frac{b}{2c}-\frac{b^2}{c^2}}\sum_{m\in\bb Z}\frac{q^{\frac{m(2m+1)}{2}+ms}\left( 1+e^{-\frac{2\pi i a}{c}}q^{m+\frac bc}\right)  }{1-e^{-\frac{2\pi i a}{c}}q^{m+\frac bc}},\\
\cal O\left(a,b,c; q\right)=\O(a,b,c;z)&:=\frac{\eta(2z)}{\eta^2(z)}e^{\frac{\pi i a}{c}\left(\frac{4b}{c}-1-t \right) }q^{\frac{tb}{2c}+\frac b{2c}-\frac{b^2}{c^2}}\sum_{m\in\Z}(-1)^m\frac{q^{\frac{m}{2}(2m+1)+\frac{mt}{2}} }{1-e^{-\frac{2\pi i a}{c}}q^{m+\frac bc}},  \\
\V\left(\frac ac; q\right)=\V\left(\frac ac;z \right)&:= \frac{\eta(2z)}{\eta^2(z)}q^{\frac14}\sum_{m\in\Z}\frac{q^{m^2+m}\left( 1+e^{-\frac{2\pi ia}{c}}q^{m+\frac12}\right) }{1-e^{-\frac{2\pi ia}{c}}q^{m+\frac12}}  .\end{align*}
Furthermore, if 
\begin{equation}\label{Hac}
H_{a,c}(x):=\frac{e^x}{1-2\cos\left(\frac{2\pi a}{c} \right)e^x+e^{2x} },
\end{equation}
we consider, for $\nu\in\mathbb Z,$ $ k\in\bb N $ and $ \widetilde k $ as defined in Section \ref{Notation}, the Mordell-type integral 
\[I_{a,c,k,\nu}:=\int_{\R}e^{-\frac{2\pi zx^2}{k}}H_{a,c}\left( \frac{2\pi i\nu}{k}-\frac{2\pi zx}{k}-\frac{\widetilde{k}\pi i}{2k} \right) dx. \]
If $ k $ is even and $ c\mid k, $ or if $ k $ is odd, $ a=1 $ and $ c=4k, $ there might be a pole at $ x=0. $ In these cases we need to take the \textit{Cauchy principal value} of the integral. We will make this precise at a later stage.
\par By using Poisson summation and proceeding similarly to Andrews \cite{AndrewsThesis}, Bringmann and Lovejoy \cite{BJoy} proved the following transformation laws\footnote{Some further corrections are in order; namely, the ``$-  $" sign in front of the expressions from (3)--(6) in their original formulation \cite[Theorem 2.1]{BJoy} should be a ``$ + $", and the other way around for (1) and (2), the reason being that the ``$ \pm $" sign from the expression of the residues $ \lambda_{n,m}^{\pm} $ (see \cite[p. 13]{BJoy}) is meant to be a ``$ \mp $". All necessary modifications are made here.}.
\begin{Thm}[{\cite[Theorem 2.1]{BJoy}}]\label{Transformations}
Assume the notation above and let $ q=e^{\frac{2\pi i}{k}(h+iz)} $ and $ q_1=e^{\frac{2\pi i}{k}\left( h'+\frac iz\right) }, $ with $ z\in\C  $ and $ \RE (z)>0. $ 
\begin{enumerate}[{\rm(1)}]
\item If $ c\mid k $ and $2\mid k, $  then 
\begin{multline*}\O\left(\frac ac;q \right)=(-1)^{k_1+1}i\cdot e^{-\frac{2\pi a^2h'k_1}{c}}\cdot\tan\left(\frac{\pi a}{c} \right)\cdot\cot\left( \frac{\pi ah'}{c}\right)\frac{\omega^2_{h,k}}{\omega_{h,k/2}}z^{-\frac12}\cdot \O\left(\frac{ah'}{c};q_1 \right) \\ +  \frac{4\sin^2\left(\frac{\pi a}{c}\right)\cdot\omega^2_{h,k}}{\omega_{h,k/2}\cdot k}z^{-\frac12}\sum_{\nu=0}^{k-1}(-1)^{\nu}e^{-\frac{2\pi ih'\nu^2}{k}}\cdot I_{a,c,k,\nu}(z) .\end{multline*}
\item If $ c\mid k $ and $ 2\nmid k, $  then 
\begin{multline*}\O\left(\frac ac;q \right)=-\sqrt 2i\cdot e^{\frac{\pi ih'}{8k}-\frac{2\pi i a^2h'k_1}{c}}\cdot\tan\left(\frac{\pi a}{c} \right)\frac{\omega^2_{h,k}}{\omega_{2h,k}}z^{-\frac12}\cdot \U\left(\frac{ah'}{c};q_1 \right) \\ +  \frac{4\sqrt2\sin^2\left(\frac{\pi a}{c}\right)\cdot\omega^2_{h,k}}{\omega_{2h,k}\cdot k}z^{-\frac12}\sum_{\nu=0}^{k-1}e^{-\frac{\pi ih'}{k}(2\nu^2-\nu)}\cdot I_{a,c,k,\nu}(z) .\end{multline*}
\item If $ c\nmid k, $ $ 2\mid k $  and $ c_1\ne 2, $ then  
\begin{multline*}\O\left(\frac ac;q \right)=2 e^{-\frac{2\pi ia^2h'k_1}{c_1c}}\cdot\tan\left(\frac{\pi a}{c} \right)\frac{\omega^2_{h,k}}{\omega_{h,k/2}}z^{-\frac12}\cdot (-1)^{c_1(\ell+k_1)}\cdot \O\left(ah',\frac{\ell c}{c_1},c;q_1 \right) \\ +  \frac{4\sin^2\left(\frac{\pi a}{c}\right)\cdot\omega^2_{h,k}}{\omega_{h,k/2}\cdot k} z^{\frac12}\sum_{\nu=0}^{k-1}(-1)^{\nu}e^{-\frac{2\pi ih'\nu^2}{k}}\cdot I_{a,c,k,\nu}(z) .\end{multline*}
\item If $ c\nmid k, $ $ 2\mid k $  and $ c_1=2, $ then 
\begin{multline*}\O\left(\frac ac;q \right)= e^{-\frac{\pi ia^2h'k_1}{c}}\cdot\tan\left(\frac{\pi a}{c} \right)\frac{\omega^2_{h,k}}{\omega_{h,k/2}\cdot k}z^{-\frac12}\cdot \V\left(\frac{ah'}{c};q_1 \right) \\ +  \frac{4\sin^2\left(\frac{\pi a}{c}\right)\cdot\omega^2_{h,k}}{\omega_{h,k/2}\cdot k} z^{\frac12}\sum_{\nu=0}^{k-1}(-1)^{\nu}e^{-\frac{2\pi ih'\nu^2}{k}}\cdot I_{a,c,k,\nu}(z) .\end{multline*}
\item If $ c\nmid k, $ $ 2\nmid k $  and $ c_1\ne 4, $ then 
\begin{multline*}\O\left(\frac ac;q \right)=\sqrt 2 e^{\frac{\pi ih'}{8k}-\frac{2\pi i a^2h'k_1}{c_1c}}\cdot\tan\left(\frac{\pi a}{c} \right)\frac{\omega^2_{h,k}}{\omega_{2h,k}}z^{-\frac12}\cdot \U\left(ah',\frac{\ell c}{c_1},c;q_1 \right) \\ +  \frac{4\sqrt2\sin^2\left(\frac{\pi a}{c}\right)\cdot\omega^2_{h,k}}{\omega_{2h,k}\cdot k} z^{\frac12}\sum_{\nu=0}^{k-1}e^{-\frac{\pi ih'}{k}(2\nu^2-\nu)}\cdot I_{a,c,k,\nu}(z) .\end{multline*}
\item If $ c\nmid k, $ $ 2\nmid k $  and $ c_1= 4, $ then 
\begin{multline*}\O\left(\frac ac;q \right)= e^{\frac{\pi ih'}{8k}-\frac{2\pi i a^2h'k_1}{c_1c}}\cdot\tan\left(\frac{\pi a}{c} \right)\frac{\omega^2_{h,k}}{\sqrt2\cdot\omega_{2h,k}}z^{-\frac12}\cdot \V\left(ah',\frac{\ell c}{c_1},c;q_1 \right) \\ +  \frac{4\sqrt2\sin^2\left(\frac{\pi a}{c}\right)\cdot\omega^2_{h,k}}{\omega_{2h,k}\cdot k} z^{\frac12}\sum_{\nu=0}^{k-1}e^{-\frac{\pi ih'}{k}(2\nu^2-\nu)}\cdot I_{a,c,k,\nu}(z) .\end{multline*}
\end{enumerate}
\end{Thm}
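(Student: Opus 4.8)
The transformation laws are due to Bringmann and Lovejoy \cite[Theorem 2.1]{BJoy}; the plan is to reconstruct them by the Mordell-integral method going back to Andrews \cite{AndrewsThesis}. First I would rewrite $\O\!\left(\frac ac;q\right)$, and likewise each auxiliary function $\U$, $\V$, $\O(a,b,c;\cdot)$, as a Dedekind eta quotient times a Lambert-type series indexed over all of $\Z$. Taking $u=\zeta_c^a$ in \eqref{overpartsrank}, so that $(1-u)(1-u^{-1})=4\sin^2\!\left(\frac{\pi a}{c}\right)$ and $(1-uq^n)(1-u^{-1}q^n)=1-2\cos\!\left(\frac{2\pi a}{c}\right)q^n+q^{2n}$, one sees that $\O\!\left(\frac ac;q\right)=\overline P(q)\bigl(1+(\text{a Lambert series whose summand is a value of }H_{a,c})\bigr)$; symmetrizing the sum over $n\ge1$ into a sum over $n\in\Z$ and absorbing the constant term, the series acquires the shape $\sum_{n\in\Z}g(n)$, with $g(n)$ a Gaussian $q^{(\text{quadratic in }n)}$ times $H_{a,c}$ evaluated at a point linear in $n$. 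The three-argument functions are exactly the ``one-sided'' companions that surface after inversion: their single denominator $1-e^{-2\pi ia/c}q^{m+b/c}$ is half of the symmetric denominator of $H_{a,c}$, the symmetry being broken by the $z\mapsto -1/z$-type map.

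Next I would insert $q=e^{\frac{2\pi i}{k}(h+iz)}$ and transform the two ingredients separately. For $\overline P(q)=\eta(2z)/\eta^2(z)$ one invokes the classical eta-multiplier transformation (Zuckerman's formula): whether $\eta\!\left(2\cdot\frac{h+iz}{k}\right)$ is handled through $\omega_{2h,k}$ (when $2\nmid k$, as then $\gcd(2h,k)=1$) or through $\omega_{h,k/2}$ (when $2\mid k$, writing $2h/k=h/(k/2)$) is precisely the origin of the dichotomy $2\mid k$ versus $2\nmid k$, of the $\sqrt2$'s, and of whether the companion eta quotient is $\eta(z/2)/\eta^2(z)$ or $\eta(2z)/\eta^2(z)$. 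For the Lambert series, I would split the index as $n=\nu+kj$ with $0\le\nu<k$, use that the Gauss phase $e^{2\pi ihn^2/k}$ depends — up to sign factors from the half-integer shifts — only on $\nu\bmod k$, replace $h$ by $h'$ via quadratic Gauss-sum reciprocity, complete the square (which produces the phases $e^{-2\pi ih'\nu^2/k}$, resp. $e^{-\pi ih'(2\nu^2-\nu)/k}$, and the constants $e^{-2\pi ia^2h'k_1/c}$), and then apply Poisson summation in $j$.

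The heart of the argument is the resulting Fourier integral. Since $H_{a,c}(x)$ has simple poles exactly where $e^x\in\{e^{\pm2\pi ia/c}\}$, i.e. at $x\in\pm\frac{2\pi ia}{c}+2\pi i\Z$, carrying out the Poisson transform amounts to evaluating an integral of the form $\int_{\R}e^{-2\pi zx^2/k}H_{a,c}(\cdots)e^{2\pi imx}\,dx$ and shifting the contour. The residues swept across, once resummed over $m$ and $\nu$, reassemble into the companion function evaluated at $q_1=e^{\frac{2\pi i}{k}(h'+i/z)}$ — namely $\O\!\left(\frac{ah'}{c};q_1\right)$ or $\U\!\left(\frac{ah'}{c};q_1\right)$ when $c\mid k$, and the shifted $\O(ah',\frac{\ell c}{c_1},c;q_1)$, $\U(ah',\frac{\ell c}{c_1},c;q_1)$, or $\V(ah',\frac{\ell c}{c_1},c;q_1)$ when $c\nmid k$; here $\ell$, defined by $\ell\equiv ak_1\pmod{c_1}$, is exactly what records the location of the relevant pole modulo the period lattice, whence the companion argument $\frac{\ell c}{c_1}$. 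What is left is an integral over $\R$ — a Cauchy principal value precisely when $c\mid k$, since then a pole of $H_{a,c}$ sits on the contour, and similarly in the exceptional case $k$ odd, $a=1$, $c=4k$ — and this is, by definition, $I_{a,c,k,\nu}(z)$; the residue computation also produces the trigonometric factors $\tan\!\left(\frac{\pi a}{c}\right)$, $\cot\!\left(\frac{\pi ah'}{c}\right)$, $\frac1{\sin(\pi ah'/c)}$, and the $4\sin^2\!\left(\frac{\pi a}{c}\right)$ multiplying the integral term.

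Finally I would assemble the six cases: $c\mid k$ versus $c\nmid k$ dictates whether the companion carries the pure argument $\frac{ah'}{c}$ or the shifted one $(ah',\frac{\ell c}{c_1},c)$; $2\mid k$ versus $2\nmid k$ dictates the eta-transformation branch; and $c_1=2$ or $c_1=4$ are the degenerate cases where the would-be pole lands on a lattice point, forcing a $\V$-type companion. I expect the \emph{main obstacle} to be exactly this bookkeeping — tracking every multiplicative constant through the Gauss-sum reciprocity and Poisson steps, and, above all, deciding precisely which residues lie on the correct side of the shifted contour. That last point is what is governed by the position of $\ell/c_1$ (and, for the auxiliary functions, of $b/c$, through $s(b,c)$ and $t(b,c)$) in the intervals $\left(0,\tfrac14\right]$, $\left(\tfrac14,\tfrac34\right]$, $\left(\tfrac34,1\right)$, and it is also where the principal-value subtlety must be handled with care; everything else is a long but routine residue-and-Gauss-sum computation.
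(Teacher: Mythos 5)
Your outline takes essentially the same route as the proof this paper relies on: the paper does not reprove the theorem but cites Bringmann--Lovejoy \cite[pp.~11--17]{BJoy}, whose argument is exactly the Andrews-style scheme you describe --- transforming the eta quotient via the multipliers $\omega_{h,k}$, $\omega_{2h,k}$, $\omega_{h,k/2}$, splitting the Lambert-type series modulo $k$, applying Poisson summation, letting the crossed residues reassemble into the companion functions $\O$, $\U$, $\V$ at $q_1$, and leaving the (principal-value) Mordell integrals $I_{a,c,k,\nu}$, with the case split governed by $c\mid k$ versus $c\nmid k$, the parity of $k$, and the degenerate values $c_1=2,4$. The bookkeeping you flag as the main obstacle (which poles are crossed, hence the signs and trigonometric factors) is indeed the delicate point: the paper's footnotes correct precisely such sign misprints in the original statement of \cite[Theorem 2.1]{BJoy}.
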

In addition to these modular transformations, we need some further estimates.
\subsection{The Mordell integral $ I_{a,c,k,\nu} $}
In the previous subsection we introduced 
\begin{equation}\label{Mordell}
I_{a,c,k,\nu}=\int_{\R}e^{-\frac{2\pi zx^2}{k}}H_{a,c}\left( \frac{2\pi i\nu}{k}-\frac{2\pi zx}{k}-\frac{\widetilde{k}\pi i}{2k} \right) dx.
\end{equation}
Recalling the definition \eqref{Hac}, it is easy to see that 
\begin{equation*}\label{Hacsinh}
H_{a,c}(x)=\frac1{4\sinh\left(\frac x2+ \frac{\pi ia}{c} \right)\sinh\left(\frac x2-\frac{\pi ia}{c} \right) },
\end{equation*}
and so $ H_{a,c}(x) $ can only have poles in points of the form  
\begin{equation*}\label{polesHac}
x=2\pi i\left(n\pm\frac ac \right) 
\end{equation*} with $ n\in\Z. $
\par For $ 2\mid k, $ $ c\mid k $ and $ \nu=\frac {ka}c $ or $ \nu=k\left(1-\frac ac \right),  $  there may be a pole at $ x=0. $ The same is true if $ 2\nmid k ,$ $ \nu=0, $ $ a=1 $ and $ c=4k. $ In both cases we must consider the Cauchy principal value of the integral $ I_{a,c,k,\nu}, $ that is, instead of $ \R $ we choose as path of integration the real line indented below 0.
\par The following\footnote{Note that there are a few typos in the formulation of the original result from which this lemma is inspired. More precisely, in the statement of \cite[Lemma 3.1]{B}, $ n^{\frac14} $ should read $ n^{\frac12}, $ $ k $ should read $ k^{-\frac12} $ and the $ 6kc $ factor from the definition of $ g_{a,c,k,\nu} $ should be removed. These changes, however, do not affect the proof.} is adapted after \cite[Lemma 3.1]{B}.
\begin{Lem}
\label{LemmawithMordell} Let $ n\in \bb N, $ $ N=\lfloor n^{1/2} \rfloor$ and  $ z=\frac kn-k\Phi i,  $ where $ -\frac1{k(k+k_1)}\le\Phi\le\frac1{k(k+k_2)} $ and $ \frac{h_1}{k_1}<\frac hk<\frac{h_2}{k_2} $ are adjacent Farey fractions in the Farey sequence of order $ N. $  If
$$ g_{a,c,k,\nu}:=\begin{cases}\left(\min \left\lbrace \left| \frac{\nu}{k}-\frac1{4k}+\frac ac\right| , \left| \frac{\nu}{k}-\frac1{4k}-\frac ac \right| \right\rbrace  \right) ^{-1} &\text{if~$ k $~is odd, $ \nu\ne0 $ and $ \frac a{c}\ne\frac1{4k} $},\\\left(\min \left\lbrace \left| \frac{\nu}{k}+\frac ac\right| , \left| \frac{\nu}{k}-\frac ac\right| \right\rbrace \right) ^{-1} &\text{if~$ k $~is even and $\nu\not\in \left\lbrace \frac{ka}{c},k\left(1-\frac ac \right)\right\rbrace  $},\\
 \frac ca &\text{otherwise},  \end{cases} $$ and $ \{x\}=x-\lfloor x\rfloor $ is the fractional part of  $ x\in\bb R, $ then
 \[z^{\frac12}\cdot I_{a,c,k,\nu}\ll_c k^{-\frac12}\cdot n^{\frac12}\cdot g_{a,c,k,\nu}.\] 
\end{Lem}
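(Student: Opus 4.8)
The plan is to bound the Mordell-type integral $I_{a,c,k,\nu}$ directly from its integral representation, after first rewriting $H_{a,c}$ in the hyperbolic form
\[
H_{a,c}(x)=\frac{1}{4\sinh\!\left(\tfrac x2+\tfrac{\pi i a}{c}\right)\sinh\!\left(\tfrac x2-\tfrac{\pi i a}{c}\right)}.
\]
In the integral $I_{a,c,k,\nu}$ we have $x$ replaced by $\tfrac{2\pi i\nu}{k}-\tfrac{2\pi z x}{k}-\tfrac{\widetilde k\pi i}{2k}$, so the two hyperbolic factors become $\sinh$ of arguments of the form $-\tfrac{\pi z x}{k}+\tfrac{\pi i}{k}\bigl(\nu-\tfrac{\widetilde k}{4}\pm a k/c\bigr)$. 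Writing $z=\tfrac kn-k\Phi i$, the real part of $-\tfrac{\pi z x}{k}$ is $-\tfrac{\pi x}{n}$, which is small and harmless; the key point is that the \emph{imaginary} part of the argument is, up to the $O(x/n)$-type perturbation, the fixed angle $\pi\bigl(\tfrac\nu k-\tfrac{\widetilde k}{4k}\pm\tfrac ac\bigr)$. The quantity $g_{a,c,k,\nu}$ is precisely (the reciprocal of) the distance from this angle to the nearest multiple of $\pi$, i.e.\ it measures how close we are to a genuine pole at $x=0$; in the generic case this distance is bounded away from $0$ and in the degenerate case ($\nu\in\{ka/c,\,k(1-a/c)\}$ for even $k$, etc.) one uses the principal-value prescription and the bound $g_{a,c,k,\nu}=c/a$.

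The main steps, in order, are: \textbf{(i)} substitute the hyperbolic form and extract the Gaussian $e^{-\frac{2\pi z x^2}{k}}$; since $\RE(z)=k/n$, this damps the integrand like $e^{-2\pi x^2/n}$, so effectively $|x|\lesssim n^{1/2}$. \textbf{(ii)} Bound $|H_{a,c}(\cdots)|$ pointwise: away from the pole, $\bigl|\sinh(u+i\theta)\bigr|\gtrsim \max(|\sin\theta_0|,|u|)$ for $\theta$ near a fixed $\theta_0\not\equiv0\pmod\pi$, and near the pole ($\theta_0\equiv0$) one has $\bigl|\sinh(u+i\theta)\bigr|\gtrsim |u+i\theta|\gtrsim |\RE u|\gtrsim |x|/n$ on the indented contour. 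Either way one gets $|H_{a,c}(\cdots)|\ll_c g_{a,c,k,\nu}\cdot\min(1,\, (n/|x|)^{2})$ or, more crudely but sufficiently, $\ll_c g_{a,c,k,\nu}$ on the bulk of the contour together with an integrable singularity of size $\ll (n/x)^2$ near $x=0$ handled by the principal value. \textbf{(iii)} Integrate: $\int_{\R} e^{-2\pi x^2/n}\,dx \asymp n^{1/2}$, so $I_{a,c,k,\nu}\ll_c n^{1/2}\,g_{a,c,k,\nu}$, and then multiply by $|z|^{1/2}\le n^{1/2}k^{-1/2}$ (one of the Farey estimates listed in Section \ref{SubsectionCircleMethod}), yielding $z^{1/2}I_{a,c,k,\nu}\ll_c k^{-1/2}n^{1/2}\,g_{a,c,k,\nu}$, which is the claim. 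One must also check that the $O(x/n)$ perturbation of the angle never moves $\theta$ across a multiple of $\pi$ in the range $|x|\lesssim n^{1/2}$ that matters — this holds because the perturbation to $\RE(zx/k)$ is $O(x/n)=O(n^{-1/2})$, negligible against the $O(1)$ or principal-value-controlled structure near the pole.

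The main obstacle is the careful case analysis around the potential pole at $x=0$ and the bookkeeping of the three cases defining $g_{a,c,k,\nu}$: one has to verify that the geometric meaning of $g_{a,c,k,\nu}$ (distance of a certain angle to $\pi\Z$) correctly captures the size of $|\sinh|$ in \emph{both} hyperbolic factors simultaneously, that the Cauchy principal value in the degenerate cases indeed produces only a bounded contribution (so that the crude bound $g_{a,c,k,\nu}=c/a$ suffices), and that the shift by $\tfrac{\widetilde k\pi i}{2k}$ — which distinguishes odd from even $k$ — lands the angle in the correct position. All of this is essentially the content of \cite[Lemma 3.1]{B} adapted to the present normalization; the estimates on $z$ and the Gaussian are completely routine, so no genuinely new difficulty arises beyond this transcription.
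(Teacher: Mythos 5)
There are two genuine gaps, both exactly at the points your sketch declares routine. First, your step (ii) rests on the claim that along the original contour the imaginary part of each $\sinh$-argument stays within an $O(x/n)$ perturbation of the fixed angle $\pi\bigl(\tfrac{\nu}{k}-\tfrac{\widetilde k}{4k}\pm\tfrac ac\bigr)$, so that it "never moves across a multiple of $\pi$". This confuses the real and imaginary parts: with $z=\tfrac kn-k\Phi i$ one has $-\tfrac{\pi zx}{k}=-\tfrac{\pi x}{n}+\pi\Phi x\, i$, so the \emph{real} part is $O(x/n)$ while the perturbation of the angle is $\pi\Phi x$, which for $|\Phi|$ as large as $\tfrac1{k(N+1)}$ and $|x|$ of size $\sqrt n$ (well inside the effective range of the Gaussian $e^{-2\pi x^2/n}$) is of order $1/k$ — for small $k$ nearly $\pi$. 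So the angle can indeed drift to (or across) $\pi\Z$, and since the real part there is only $O(n^{-1/2})$, your pointwise lower bound on $|\sinh|$ fails in precisely the range you need; one would have to split the $x$-range and play the Gaussian decay against the drift, which is extra work you have not done. The paper avoids this entirely by the substitution $\tau=\tfrac{\pi zx}{k}$ followed by rotating the contour back to the real axis (justified by decay and absence of poles): after that the imaginary part is \emph{exactly} the fixed angle, and $|\sinh(t+i\theta_0)|\ge|\sin\theta_0|$ holds uniformly in $t\in\mathbb{R}$, giving $g_{a,c,k,\nu}$ cleanly.

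Second, the degenerate case is not handled by your argument. Near $x=0$ the singular factor gives $|H_{a,c}(\cdots)|\gg 1/|x|$ in absolute value (your stated $(n/x)^2$ is even worse), and $\int_{0}dx/|x|$ diverges, so no absolute-value bound "handled by the principal value" can produce the bounded contribution you assert; the principal value must be exploited through cancellation. The paper does this concretely: after the same substitution it indents the contour below $0$, applies the Residue Theorem to shift the $t$-line up to $\IM w=\pi a/c$, picks up the residue $i/\sin\!\left(\tfrac{2\pi a}{c}\right)$ at $w=0$, and bounds the shifted integral, which is exactly where the factor $c/a$ in $g_{a,c,k,\nu}$ comes from. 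This residue computation is the substantive content of the "otherwise" case and is missing from your proposal. (Minor point: your final step quotes $|z|^{1/2}\le n^{1/2}k^{-1/2}$, but the listed Farey estimate bounds $|z|^{-1/2}$; the correct bound $|z|^{1/2}\ll n^{-1/4}$ does suffice, so this is only a misquotation.)
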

\begin{proof}
Let us first treat the case when $ k $ is odd and we encounter no poles. We have $ \widetilde{k}=1 $ and 
\[I_{a,c,k,\nu}=\int_{\bb R}e^{-\frac{2\pi zx^2}{k}}H_{a,c}\left( \frac{2\pi i\nu}{k}-\frac{2\pi zx}{k}-\frac{\pi i}{2k} \right) dx.\] 
If we write $ \frac{\pi z}{k}=re^{i\phi} $ with $ r>0, $ then $ |\phi|<\frac{\pi }{2} $ since $ {\rm Re}(z)>0. $ The substitution $ \tau=\frac{\pi zx}{k} $ yields 
\begin{equation}\label{integralHac}
z^{\frac12}\cdot I_{a,c,k,\nu}(z)=\frac{k}{\pi z^{\frac12}}\int_L e^{-\frac{2k\tau^2}{\pi z}}H_{a,c}\left( \frac{2\pi i\nu}{k}-\frac{\pi i}{2
k} -2\tau\right) d\tau,
\end{equation}
where $ L $ is the line passing through 0 at an angle of argument $ \pm \phi. $
One easily sees that, 
for $0\le t\le \phi,  $
\begin{equation*}
\left| e^{-\frac{2kR^2e^{2it}}{\pi z}}H_{a,c}\left( \frac{2\pi i\nu}{k}-\frac{\pi i}{2
k} \pm2Re^{it}\right) dx\right| \to 0\quad{\text{as~}R\to\infty}.
\end{equation*}
As the integrand from \eqref{integralHac} has no poles, we can shift the path $ L $ of integration to the real line and obtain 
\begin{equation*}
z^{\frac12}\cdot I_{a,c,k,\nu}(z)=\frac{k}{\pi z^{\frac12}}\int_{\R} e^{-\frac{2kt^2}{\pi z}}H_{a,c}\left( \frac{2\pi i\nu}{k}-\frac{\pi i}{2
k} -2t\right) dt.
\end{equation*}
The inequality
\[\left| \sinh\left(\frac{\pi i\nu}{k}-\frac{\pi i}{4k}-t\pm\frac{\pi i a}{c} \right)  \right|\ge 
\left| \sin\left(\frac{\pi \nu}{k}-\frac{\pi }{4k}\pm\frac{\pi  a}{c} \right)  \right| \]
follows immediately for $ t\in\R  $ from the definition of $ \sinh $ and some easy manipulations, while the estimate
\[\left| \sin\left(\frac{\pi \nu}{k}-\frac{\pi }{4k}-\frac{\pi  a}{c} \right)  \right|\left| \sin\left(\frac{\pi \nu}{k}-\frac{\pi }{4k}+\frac{\pi  a}{c} \right)  \right|\gg_c \min\left\lbrace\left| \frac{\nu}{k}-\frac{1}{4k}+\frac ac\right|  , \left| \frac{\nu}{k}-\frac{1}{4k}-\frac ac \right| \right\rbrace\]
is clear. Therefore we have
\[z^{\frac12}\cdot I_{a,c,k,\nu}(z)\ll_c \frac k{\min\left\lbrace\left\lbrace\frac{\nu}{k}-\frac{1}{4k}+\frac ac \right\rbrace , \left\lbrace\frac{\nu}{k}-\frac{1}{4k}-\frac ac \right\rbrace\right\rbrace|z|^{\frac12}}\int_{\bb R}e^{-\frac{2k}{\pi }\RE \left(\frac1z \right) t^2}dt,\]
and, noting that 
\[\Big| e^{-\frac{2kt^2}{\pi z}}\Big|=e^{-\frac{2k}{\pi }\RE \left(\frac1z \right) t^2},\quad\RE\left( \frac 1z\right)^{-\frac12}\cdot |z|^{-\frac 12}\le \sqrt2 \cdot \sqrt n\cdot k^{-1}, \] the claim follows on making the substitution $ t\mapsto \sqrt{\frac{2k\RE \left( \frac1z\right) }{\pi}}\cdot t.$ 
\par If $ k $ is even and $ c\nmid k, $ then we proceed similarly as above. If, however, the integrand in \eqref{Mordell}  has a pole at $ x=0, $ in both of the cases $ c\mid k $ and $ c\nmid k, $ instead of $ \R, $ we must consider the path of integration to be the real line indented below 0. \par For simplicity, let us present the case when $ 2\mid k, $ as the case $ 2\nmid k $ is completely analogous. After doing the same change of variables as before and (if needed) shifting the path of integration (which will now consist of a straight line passing through 0 at an angle $ \pm\phi $ with a small segment centered at 0 removed and replaced by a semicircle inclined also at an angle $ \pm\phi $), the new path of integration will be given by $ \gamma_{R,\varepsilon}= [-R,-\varepsilon]\cup C_{\varepsilon}\cup [R,\varepsilon], $ where $ C_{\varepsilon} $ is the positively oriented semicircle of radius $ \varepsilon $ around 0 below the real line and 
\begin{equation*}\label{PP}
I_{a,c,k,\nu}=\frac k{\pi z}\int_{\gamma_{R,\varepsilon}}e^{-\frac{2 kt^2}{\pi z}}H_{a,c}\left( \frac{2\pi i\nu}{k}-2t \right) dt=\frac k{4\pi z}\int_{\gamma_{R,\varepsilon}}\frac{e^{-\frac{2 kt^2}{\pi z}}}{\sinh (t)\sinh\left(t-\frac{2\pi ia}{c} \right) }dt.
\end{equation*}
If we let $ D_{R,\varepsilon} $ be the enclosed path of integration $ \gamma_{R,\varepsilon}\cup [R,R+\pi ia/c]\cup [R+\pi ia/c,-R+\pi ia/c]\cup [-R+\pi ia/c,-R]$ and we set \[f(w):=\frac{e^{-\frac{2 kw^2}{\pi z}}}{\sinh (w)\sinh\left(w-\frac{2\pi ia}{c} \right) },\] then by the Residue Theorem we obtain 
\[\frac{4\pi z}{k}\cdot I_{a,c,k,\nu}=-\frac{2\pi}{\sin\left( \frac{2\pi a}{c}\right) }+\left( \int_{-R+\pi ia/c}^{-R}+\int_{-R+\pi ia/c}^{R+\pi ia/c}+\int_{R+\pi ia/c}^{R}\right) \frac{e^{-\frac{2 kw^2}{\pi z}}}{\sinh (w)\sinh\left(w-\frac{2\pi ia}{c} \right) }dw, \]
since inside and on $ D_{R,\varepsilon} $ the only pole of $ f $ is at $ w=0, $ with residue $$ \Res\limits_{w=0}f(w)=\frac i{\sin\left( \frac{2\pi a}{c}\right)}. $$
On $ [-R+\pi ia/c,-R] $ and $ [R+\pi ia/c,R] $ we have $ \left| \sinh(w) \sinh\left(w-\frac{2\pi ia}{c} \right)\right|\ge \sinh^2 R $ and $ \Big|e^{-\frac{2kw^2}{\pi z}} \Big|  =e^{-\frac{2k}{\pi }\RE\left(\frac1z \right)R^2 },$ thus the two corresponding integrals tend to 0 as $ R\to 0, $ whereas on $ [-R+\pi ia/c,R+\pi ia/c] $ we have, after a change of variables,
\[\int_{-R+\pi ia/c}^{R+\pi ia/c} \frac{e^{-\frac{2kw^2}{\pi z}}}{\sinh (w)\sinh\left(w-\frac{2\pi ia}{c} \right) }dw=\int_{-R}^{R} \frac{e^{-\frac{2k\left(t+\frac{\pi ia}{c} \right)^2}{\pi z}}}{\sinh \left(t+\frac{\pi ia}{c} \right) \sinh\left(t-\frac{\pi ia}{c} \right) }dt. \]
Proceeding now along the same lines as before, we obtain 

\[z^{\frac12}\cdot I_{a,c,k,\nu}(z)\ll \left( \frac {\pi a}c\right)^{-1}  \cdot \frac k{   |z|^{\frac12}}\int_{\bb R}e^{-\frac{2k}{\pi }\RE \left(\frac1z \right) t^2}dt,\] 
and the proof is complete. 
\end{proof}

\subsection{Kloosterman sums} The following is a variation of \cite[Lemma 4.1]{AndrewsThesis}, cf. Bringmann \cite[Lemma 3.2]{B}.
\begin{Lem}\label{estimateKloosterman}
Let $ m,n\in\bb Z , $ $ 0\le\sigma_1<\sigma_2\le k $ and $ D\in\bb Z $ with $ (D,k)=1. $
\begin{enumerate}[{\rm(i)}]
\item We have 
\begin{equation}\label{ineqKloosterman}
\primesum\sum_{\substack{h\\\sigma_1\le Dh' \le \sigma_2}}\frac{\omega^2_{h,k}}{\omega_{2h,k}}\cdot e^{\frac{2\pi i }{k}(hn+h'm)}\ll (24n+1,k)^{\frac12}\cdot k^{\frac{1}{2}+\varepsilon}.
\end{equation}
\item If $ c\mid k,$ we have 
\begin{equation}\label{ourineqKloosterman}
\tan\left(\frac{\pi a}{c} \right) \primesum\sum_{\substack{h\\\sigma_1\le Dh'\le\sigma_2}}\frac{\omega^2_{h,k}}{\omega_{2h,k}}\frac{1}{\sin\left(\frac{\pi ah'}{c} \right)}\cdot e^{-\frac{2\pi ih'a^2k_1}{c}}\cdot e^{\frac{2\pi i}{k}(nh+mh')}\ll (24n+1,k)^{\frac12}\cdot k^{\frac{1}{2}+\varepsilon}.
\end{equation} 
\item If $ c\mid k,$ we have
\begin{equation}\label{ourineqKloosterman2}
 \tan\left(\frac{\pi a}{c} \right) \primesum\sum_{\substack{h\\\sigma_1\le Dh'\le\sigma_2}}\frac{\omega^2_{h,k}}{\omega_{2h,k}}(-1)^{k_1+1}\cot\left(\frac{\pi ah'}{c} \right)\cdot e^{-\frac{2\pi ih'a^2k_1}{c}}\cdot e^{\frac{2\pi i}{k}(nh+mh')}\ll (24n+1,k)^{\frac12}\cdot k^{\frac{1}{2}+\varepsilon}.
\end{equation} 
\end{enumerate} 
The implied constants are independent of $ a $ and $k, $ and $ \varepsilon>0 $ can be taken arbitrarily.

\end{Lem}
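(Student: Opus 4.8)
The plan is to treat \eqref{ineqKloosterman} as the basic estimate and to deduce \eqref{ourineqKloosterman} and \eqref{ourineqKloosterman2} from it. For \eqref{ineqKloosterman} I would follow the classical route of Andrews \cite{AndrewsThesis} and Bringmann \cite{B}, the only difference being that the relevant multiplier is now $\omega_{h,k}^2/\omega_{2h,k}$, i.e.\ the one attached to $\overline P(q)=\eta(2z)/\eta^2(z)$. Expanding $\omega_{h,k}$ through Dedekind's formula (see \cite[p.~62]{Apostol}) and simplifying by the reciprocity law for Dedekind sums, one checks that $\omega_{h,k}^2/\omega_{2h,k}$ is a root of unity whose argument, after clearing denominators, is $\frac{2\pi i}{K}$ times an integral form linear in $h$ and in $h'$ up to a constant depending on $k$ alone, for a suitable $K$ with $k\mid K\mid 24k$. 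Combined with $e^{\frac{2\pi i}{k}(hn+h'm)}$, this rewrites the left-hand side of \eqref{ineqKloosterman}, up to a unimodular factor, as an incomplete Kloosterman-type sum over the $h$ in the range $0\le h<k$, $(h,k)=1$, $\sigma_1\le Dh'\le\sigma_2$. I would then identify it as a genuine (possibly twisted) Kloosterman sum, complete it to full sums by expanding the indicator of $[\sigma_1,\sigma_2]$ into additive characters (a P\'olya--Vinogradov type step costing only a logarithmic factor), and apply Weil's bound $|S(\alpha,\beta;K)|\le d(K)\,(\alpha,\beta,K)^{1/2}K^{1/2}$; sweeping the divisor function and the completion loss into $k^{\varepsilon}$, and reading off the relevant greatest common divisor as $(24n+1,k)$, exactly as in the circle-method treatment of $\overline p(n)$, yields \eqref{ineqKloosterman}.

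To pass to \eqref{ourineqKloosterman} and \eqref{ourineqKloosterman2} I would use that $c\mid k$ forces $(h',c)=1$; hence $\sin(\pi ah'/c)$ and $\cos(\pi ah'/c)$ never vanish and, for a fixed $c$, each of the weights $\tan(\pi a/c)$, $1/\sin(\pi ah'/c)$, $\cot(\pi ah'/c)$ and $(-1)^{k_1+1}$ is bounded above by a constant depending on $c$ alone, uniformly in $a$, $h$ and $k$. These weights enter in two ways. First, $e^{-\frac{2\pi ih'a^2k_1}{c}}=e^{\frac{2\pi i}{k}(-a^2k_1^2)h'}$ with $-a^2k_1^2\in\Z$, so it is merely a shift $m\mapsto m-a^2k_1^2$. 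Second, the remaining weights depend only on $h'$ modulo $2c$, so I would split the sum over $h$ into the at most $2c$ residue classes $h'\equiv\rho\pmod{2c}$, pull the now-constant weight out of each class, and detect the congruence via $\frac{1}{2c}\sum_{j}e^{\frac{2\pi i}{2c}j(h'-\rho)}$; since $c\mid k$, each $e^{\frac{2\pi i}{2c}jh'}$ equals $e^{\frac{2\pi i}{2k}(jk_1)h'}$ with $jk_1\in\Z$, i.e.\ again a shift of $m$, at worst after doubling the Kloosterman modulus to $2k$. Each of the $O_c(1)$ resulting sums is then precisely of the form \eqref{ineqKloosterman}, and summing the bounds gives \eqref{ourineqKloosterman} and \eqref{ourineqKloosterman2}; the factor $(24n+1,k)^{1/2}$ is unaffected because the bound in \eqref{ineqKloosterman} does not involve $m$.

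The main obstacle will lie entirely inside \eqref{ineqKloosterman}: turning $\omega_{h,k}^2/\omega_{2h,k}$ into an honest Kloosterman phase requires carrying the $2$-adic and Jacobi-symbol contributions through the $\eta$-transformation and applying Dedekind reciprocity, with the parities $2\mid k$ and $2\nmid k$ behaving differently, and it is this bookkeeping that identifies the relevant greatest common divisor as $(24n+1,k)$ and produces a sum to which Weil's bound genuinely applies. Once \eqref{ineqKloosterman} is granted, essentially as in \cite{AndrewsThesis,B}, the only delicate point in the reduction above is checking that no hidden dependence on $a$ or $k$ enters the implied constants, which is guaranteed precisely by $(h',c)=1$ keeping all the trigonometric weights bounded away from $0$ and $\infty$ uniformly in $a$.
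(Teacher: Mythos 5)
Your overall route is the paper's: part (i) is exactly Andrews's Lemma 4.1 with $\omega_{h,k}$ replaced by $\omega_{h,k}^2/\omega_{2h,k}$ (the paper handles it by this citation, so the Dedekind-sum/completion/Weil bookkeeping you sketch is precisely what is outsourced there), and parts (ii)--(iii) are deduced from \eqref{ineqKloosterman} by absorbing $e^{-2\pi i h'a^2k_1/c}$ as an integer shift of $m$ (or, as the paper does, keeping it inside the class decomposition), splitting $h'$ into residue classes modulo a small modulus on which the trigonometric weights are constant and $O_c(1)$ (legitimate, since $c\mid k$ gives $(h',c)=1$), detecting the class by additive characters, and invoking (i) again. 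This is the paper's argument almost verbatim.

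The one step that does not work as written is your uniform choice of the modulus $2c$. Detecting $h'\equiv\rho\pmod{2c}$ introduces factors $e^{\frac{2\pi i jh'}{2c}}=e^{\frac{2\pi i (jk_1)h'}{2k}}$, and when $jk_1$ is odd this is a shift of $m$ by the half-integer $jk_1/2$, which is not of the form covered by \eqref{ineqKloosterman} (integer $m$, modulus $k$); ``doubling the Kloosterman modulus to $2k$'' is not an application of (i) as stated, and such a phase is not even well defined as a function of $h'\bmod k$ (it changes under $h'\mapsto h'+k$), so one would have to fix representatives and prove a separate variant of (i). The paper sidesteps this by a parity-adapted modulus $\widetilde c$: it takes $\widetilde c=c$ when $k$ is odd and $\widetilde c=2c$ when $k$ is even, so that the detected characters become integer shifts $m\mapsto m+\frac{kr}{\widetilde c}$. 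In the odd case (where $c\mid k$ forces $c$ odd, and where $h'$ may be taken even, per the remark following Theorem \ref{MainThm}), the class of $h'$ modulo $c$ already determines it modulo $2c$, so the weights are constant on classes mod $c$ and the shifts $rk_1$ are integral; with this adjustment your reduction goes through and coincides with the paper's proof.
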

\begin{proof}
Part (i) follows simply on replacing $ \omega_{h,k} $ by $\frac{\omega^2_{h,k}}{\omega_{2h,k}}  $ in the proof of Andrews \cite[Lemma 4.1]{AndrewsThesis}. As the proof of \eqref{ourineqKloosterman2} is completely analogous to that of \eqref{ourineqKloosterman}, we deal only with part (ii).   
We set $ \widetilde{c}=c $ if $ k  $ is odd, and $ \widetilde{c}=2c $ if $ k $ is even. 
Since $ e^{-\frac{2\pi ih'a^2k_1}{c}} $ depends only on the residue class of $ h'$ modulo $\widetilde{c}, $ the left-hand side of \eqref{ourineqKloosterman} can be rewritten as \[\tan\left(\frac{\pi a}{c} \right)\sum_{c_j}\frac{e^{-\frac{2\pi ia^2k_1c_j}{c}}}{\sin\left(\frac{\pi ac_j}{c} \right)}  \sideset{}{'}\sum_{\substack{h\\\sigma_1\le Dh'\le\sigma_2\\h'\equiv c_j\pmod*{\widetilde{c}}}}\frac{\omega^2_{h,k}}{\omega_{2h,k}}\cdot e^{\frac{2\pi i}{k}(nh+mh')}, \]
where $ c_j $ runs over a set of primitive residues modulo $ \widetilde{c}. $ Furthermore, we have  
\begin{align*} \primesum\sum_{\substack{h\\\sigma_1\le Dh'\le\sigma_2\\h'\equiv c_j\pmod*{\widetilde{c}}}}\frac{\omega^2_{h,k}}{\omega_{2h,k}}\cdot e^{\frac{2\pi i}{k}(nh+mh')}&=\frac 1{\widetilde{c}}\sideset{}{'}\sum_{\substack{h\\\sigma_1\le Dh'\le\sigma_2}}\frac{\omega^2_{h,k}}{\omega_{2h,k}}\cdot e^{\frac{2\pi i}{k}(nh+mh')}\sum_{r\pmod*{\widetilde c}} e^{\frac{2\pi ir}{\widetilde c}(h'-c_j)}\\ &=\frac 1{\widetilde{c}}\sum_{r\pmod*{\widetilde c}}e^{-\frac{2\pi irc_j}{\widetilde c}(h'-c_j)}\primesum\sum_{\substack{h\\\sigma_1\le Dh'\le\sigma_2\\h'\equiv c_j\pmod*{\widetilde{c}}}}\frac{\omega^2_{h,k}}{\omega_{2h,k}}\cdot e^{\frac{2\pi i}{k}\left( nh+\left( m+\frac{kr}{\widetilde c}\right) h'\right) }
\end{align*}
and the proof is concluded on invoking part (i) and noting that $ \frac{kr}{\widetilde c}\in\bb Z. $   
\end{proof}

\section{Asymptotics for $ A\left(\frac ac;n \right)  $ and $ N(a,c,n) $}\label{Proof of Theorem 1}
We turn our focus now to the proof of Theorem \ref{MainThm} and proceed as described in the strategy outlined in Section \ref{Strategy}, the whole section being dedicated to this purpose.
\begin{proof}[Proof of Theorem \ref{MainThm}]
On using Cauchy's Theorem and splitting the path of integration into Farey arcs as explained in Section \ref{SubsectionCircleMethod}, we obtain, from \eqref{CircleFarey} and Theorem \ref{Transformations}, 
\begin{multline*}
A\left(\frac ac;n \right)=i\tan \left(\frac{\pi a}{c} \right) \sum_{\substack{h,k\\2|k,~c|k}}\frac{\omega^2_{h,k}}{\omega_{h,k/2}}(-1)^{k_1+1}\cot\left(\frac{\pi ah'}{c} \right)e^{-\frac{2\pi ia^2h'k_1}{c}-\frac{2\pi inh}{k}}\int_{-\vartheta'_{h,k}}^{\vartheta''_{h,k}} z^{-\frac12} e^{\frac{2\pi nz}{k}} \cal O\left(\frac{ah'}{c}; q_1\right)d\Phi\\
-\sqrt2i\tan \left(\frac{\pi a}{c} \right) \sum_{\substack{h,k\\2\nmid k,~c|k}}\frac{\omega^2_{h,k}}{\omega_{2h,k}}e^{\frac{\pi ih'}{8k}-\frac{2\pi ia^2h'k_1}{c}-\frac{2\pi inh}{k}}\int_{-\vartheta'_{h,k}}^{\vartheta''_{h,k}} z^{-\frac12} e^{\frac{2\pi nz}{k}} \cal U\left(\frac{ah'}{c}; q_1\right)d\Phi\\
+2\tan \left(\frac{\pi a}{c} \right) \sum_{\substack{h,k\\2|k,~c\nmid k,~c_1\ne 2}}\frac{\omega^2_{h,k}}{\omega_{h,k/2}}(-1)^{c_1(\ell+k_1)}e^{-\frac{2\pi ia^2h'k_1}{c_1c}-\frac{2\pi inh}{k}}\int_{-\vartheta'_{h,k}}^{\vartheta''_{h,k}} z^{-\frac12} e^{\frac{2\pi nz}{k}} \cal O\left(ah',\frac{\ell c}{c_1},c; q_1\right)d\Phi\\
+\tan \left(\frac{\pi a}{c} \right) \sum_{\substack{h,k\\2| k,~c\nmid k,~c_1= 2}}\frac{\omega^2_{h,k}}{\omega_{h,k/2}}e^{-\frac{\pi ia^2h'k_1}{c}-\frac{2\pi inh}{k}}\int_{-\vartheta'_{h,k}}^{\vartheta''_{h,k}} z^{-\frac12} e^{\frac{2\pi nz}{k}} \cal V\left(\frac{ah'}{c}; q_1\right)d\Phi\\
+\sqrt2\tan \left(\frac{\pi a}{c} \right) \sum_{\substack{h,k\\2\nmid k,~c\nmid k,~c_1\ne 4}}\frac{\omega^2_{h,k}}{\omega_{2h,k}}e^{\frac{\pi ih'}{8k}-\frac{2\pi ia^2h'k_1}{c_1c}-\frac{2\pi inh}{k}}\int_{-\vartheta'_{h,k}}^{\vartheta''_{h,k}} z^{-\frac12} e^{\frac{2\pi nz}{k}} \cal U\left(ah',\frac{\ell c}{c_1},c; q_1\right)d\Phi\\
+\frac1{\sqrt2}\tan \left(\frac{\pi a}{c} \right) \sum_{\substack{h,k\\2\nmid k,~c\nmid k,~c_1= 4}}\frac{\omega^2_{h,k}}{\omega_{h,k/2}}e^{\frac{\pi ih'}{8k}-\frac{2\pi ia^2h'k_1}{c_1c}-\frac{2\pi inh}{k}}\int_{-\vartheta'_{h,k}}^{\vartheta''_{h,k}} z^{-\frac12} e^{\frac{2\pi nz}{k}} \cal V\left(ah',\frac{\ell c}{c_1},c; q_1\right)d\Phi\\
+4\sin^2\left(\frac{\pi a}{c}\right) \sum_{\substack{h,k\\2|k}}\frac{\omega^2_{h,k}}{\omega_{h,k/2}\cdot k} e^{-\frac{2\pi inh}{k}} \sum_{\nu=0}^{k-1}(-1)^{\nu}e^{-\frac{2\pi ih'\nu^2}{k}}\int_{-\vartheta'_{h,k}}^{\vartheta''_{h,k}} z^{\frac12} e^{\frac{2\pi nz}{k}} I_{a,c,k,\nu}(z)d\Phi\\
+4\sqrt2\sin^2\left(\frac{\pi a}{c}\right) \sum_{\substack{h,k\\2\nmid k}}\frac{\omega^2_{h,k}}{\omega_{2h,k}\cdot k} e^{-\frac{2\pi inh}{k}} \sum_{\nu=0}^{k-1}e^{-\frac{\pi ih'}{k}(2\nu^2-\nu)}\int_{-\vartheta'_{h,k}}^{\vartheta''_{h,k}} z^{\frac12} e^{\frac{2\pi nz}{k}} I_{a,c,k,\nu}(z)d\Phi\\
=:\sum_1+\sum_2+\sum_3+\sum_4+\sum_5+\sum_6+\sum_7+\sum_8.
\end{multline*}
For the reader's convenience, we divide our proof into several steps.
We start by estimating the sums $ \sum_2,\sum_5 $ and $ \sum_6, $ which, as we shall see, will give the main contribution. The sums $ \sum_1,\sum_3,\sum_4,\sum_7 $ and $ \sum_8 $ will go into an error term and will be dealt with at the end. Here the analysis will also split, as the latter two sums can be treated together. 
\subsection{Estimates for the sums $\sum_2,\sum_5 $ and $ \sum_6 $}
	
To estimate $ \sum_2,  $ notice that we can write 
\begin{align*}
\sum_{n\in\bb Z}\frac{(1+q^n)q^{n^2+\frac  n2}}{1-2\cos\left(\frac{2\pi a}{c} \right)q^n+q^{2n} } &=\frac{1}{2\sin^2\left(\frac{\pi a}{c} \right) }+2\sum_{n\ge 1}\frac{(1+q^n)q^{n^2+\frac  n2}}{1-2\cos\left(\frac{2\pi a}{c} \right)q^n+q^{2n} }\\
&=\frac{1}{2\sin^2\left(\frac{\pi a}{c} \right) }+2\sum_{2|n }\frac{(1+q^n)q^{n^2+\frac  n2}}{1-2\cos\left(\frac{2\pi a}{c} \right)q^n+q^{2n} }+2q^{\frac12}\sum_{2\nmid n }\frac{(1+q^n)q^{n^2+\frac  {n-1}2}}{1-2\cos\left(\frac{2\pi a}{c} \right)q^n+q^{2n} }\\&=\frac{1}{2\sin^2\left(\frac{\pi a}{c} \right) }+\sum_{r\ge1}a_2(r)e^{\frac{2\pi i m_rh'}{k}-\frac{2\pi r}{kz}}+q^{\frac12}\sum_{r\ge1}b_2(r)e^{\frac{2\pi i n_rh'}{k}-\frac{2\pi r}{kz}},
\end{align*}
where $ m_r,n_r\in \bb Z $ and the coefficients $ a_2(r)$ and $b_2(r) $ are independent of $ k $ and $ h. $
\par On replacing $ z $ by $ z_1=z/2 $ in \eqref{GenFunOver}, we have 
\begin{align*}
\cal U\left(\frac{ah'}{c}; q_1\right)&=\sin\left(\frac{\pi ah'}{c} \right)\frac{\eta\left(\frac{z_1}2 \right) }{\eta^2(z_1)}\sum_{n\in\Z}\frac{(1+q_1^n)q_1^{n^2+\frac n2}}{1-2q_1^n\cos\left(\frac{2\pi ah'}{c} \right)+q_1^{2n} }\\
&=\sin\left(\frac{\pi ah'}{c} \right)\frac{\eta\left(\frac{z_1}2 \right) }{\eta^2(z_1)}\Bigg( \frac1{2\sin^2\left( \frac{\pi ah'}{c}\right) }+2\sum_{n\ge1}\frac{(1+q_1^n)q_1^{n^2+\frac n2}}{1-2q_1^n\cos\left(\frac{2\pi ah'}{c} \right)+q_1^{2n} }\Bigg)\\
&=q_1^{-\frac1{16}}\cdot \ol P(q_1)\cdot\Bigg(\frac{1}{2\sin\left(\frac{\pi ah'}{c} \right) } +2\sin\left(\frac{\pi ah'}{c} \right)\sum_{n\ge1}\frac{(1+q_1^n)q_1^{n^2+\frac n2}}{1-2q_1^n\cos\left(\frac{2\pi ah'}{c} \right)+q_1^{2n} }\Bigg), \end{align*}
where we write $ q_1=e^{2\pi iz_1}. $ It follows that
\begin{align*}
\sum_2&=-\sqrt2i\tan \left(\frac{\pi a}{c} \right) \sum_{\substack{h,k\\2\nmid k,~c|k}}\frac{\omega^2_{h,k}}{\omega_{2h,k}}e^{\frac{\pi ih'}{8k}-\frac{2\pi ia^2h'k_1}{c}-\frac{2\pi inh}{k}}\int_{-\vartheta'_{h,k}}^{\vartheta''_{h,k}} z^{-\frac12}\cdot e^{\frac{2\pi nz}{k}}\cdot \cal U\left(\frac{ah'}{c}; q_1\right)d\Phi\\
&=-\frac{i}{\sqrt2}\tan\left(\frac{\pi a}{c} \right) \sum_{2\nmid k,~c|k}\primesum\sum_{h}\frac{\omega^2_{h,k}}{\omega_{2h,k}}\frac{1}{\sin\left(\frac{\pi ah'}{c} \right)}e^{-\frac{2\pi ia^2h'k_1}{c}-\frac{2\pi inh}{k}}\int_{-\vartheta'_{h,k}}^{\vartheta''_{h,k}} z^{-\frac12}\cdot e^{\frac{2\pi nz}{k}+\frac{\pi}{8kz}}\cdot \widetilde{\cal U}\left(\frac{ah'}{c}; q_1\right)d\Phi,
\end{align*}
with
\begin{equation*}
\widetilde{\cal U}\left(\frac{ah'}{c}; q_1\right)  =1+4\sin^2\left(\frac{\pi ah'}{c} \right)\Bigg(\sum_{r\ge1}{a}_2(r)e^{\frac{2\pi i {m}_rh'}{k}-\frac{2\pi r}{kz}}+q^{\frac12}\sum_{r\ge1}{b}_2(r)e^{\frac{2\pi i {n}_rh'}{k}-\frac{2\pi r}{kz}}\Bigg).
\end{equation*}
\par We treat the sum coming from the constant term and the two sums coming from the case $ r\ge1 $ separately. The former will contribute to the main term, while the latter two sums will contribute to the error term. We denote the associated sums by $ S_1 $, $ S_2 $ and $ S_3 $ and we first estimate $ S_2 $ ($ S_3 $ is dealt with in a similar manner).\par We recall, from Section \ref{SubsectionCircleMethod}, the easy facts that  
\begin{equation}\label{boundsFarey}
{\rm Re}(z)=\frac kn,\quad  {\rm Re}\left( \frac 1z\right)>\frac k2,\quad |z|^{-\frac12}\le n^{\frac12}\cdot k^{-\frac12} \quad\text{and}\quad \vartheta_{h,k}'+\vartheta_{h,k}''\le \frac{2}{k(N+1)}.
\end{equation} 
\par We write
\begin{equation}\label{splitintegralRademacher}
\int_{-\vartheta_{h,k}'}^{\vartheta_{h,k}''}=\int_{-\frac1{k(N+k)}}^{\frac1{k(N+k)}}+\int_{-\frac1{k(k_1+k)}}^{-\frac1{k(N+k)}}+\int_{\frac1{k(N+k)}}^{\frac1{k(k_2+k)}}\end{equation}
and denote the associated sums by $ S_{21}, $ $ S_{22} $ and $ S_{23}. $
This way of splitting the integral is motivated by the Farey dissection used by Rademacher \cite[pp. 504--509]{Rademacher}. It allows us to interchange summation with the integral and yields a so-called \textit{complete} Kloosterman sum and two \textit{incomplete} Kloosterman sums. Lemma \ref{estimateKloosterman} applies to both types of sums.
\par We first consider $ S_{21}. $ As we have already seen,
\begin{equation*}
\overline{p}(n)\sim\frac1{8n}e^{\pi\sqrt n},
\end{equation*}  
thus 
$\overline{p}(n)<e^{\pi\sqrt n}$ as $ n\to\infty. $ Clearly, the coefficients of $ \cal O(u;q), $ regarded as a series in $ q $ when evaluated at a root of unity $ u=\zeta_c^a, $ satisfy \[\left| A\left(\frac ac;n \right)\right| \le \sum_{m=-\infty}^{\infty}|\ol N(m,n)\zeta_c^{am}|\le \sum_{m=-\infty}^{\infty}\ol N(m,n)=\ol p(n), \]
thus, in light of the transformation behavior shown in Theorem \ref{Transformations}, the coefficients $ a_2(r) $ and $ b_2(r) $ satisfy 
\begin{equation}
\label{asympoverpartitions}
|a_2(r)|,~|b_2(r)|\le e^{\pi\sqrt r}\quad\text{as~}r\to\infty.
\end{equation}
\par As the integral that appears inside the sum does not depend on $ h, $ in evaluating $ S_{21} $ we can perform summation with respect to $ h. $ Using, in turn, the bound \eqref{asympoverpartitions}, Lemma \ref{estimateKloosterman},  the estimates from \eqref{boundsFarey}, and the well-known bound $ \sigma_0(n)=o(n^{\epsilon}) $ for all $ \epsilon>0, $ we obtain
\begin{align*}
S_{21}&\ll \Bigg| \sum_{r=1}^{\infty}a_2(r) \sum_{c|k} \tan\left(\frac{\pi a}{c} \right)\primesum \sum_h \frac{\omega^2_{h,k}}{\omega_{2h,k}}\cdot\frac{1}{\sin\left(\frac{\pi ah'}{c} \right)}\cdot e^{-\frac{2\pi ih'a^2k_1}{c}-\frac{2\pi inh}{k}+\frac{2\pi i m_rh'}{k}}  \\
&\phantom{\ll \left| \sum_{r=1}^{\infty}a_2(r) \sum_{c|k} \tan\left(\frac{\pi a}{c} \right)\primesum \sum_h \frac{\omega^2_{h,k}}{\omega_{2h,k}}\cdot\frac{1}{\sin\left(\frac{\pi ah'}{c} \right)}\right.~} \cdot \int_{-\frac1{k(N+k)}}^{\frac1{k(N+k)}}z^{-\frac12}\cdot e^{-\frac{2\pi }{kz}\left( r-\frac1{16}\right) +\frac{2\pi zn}{k}}d\Phi  \Bigg|\\
&\ll \sum_{r=1}^{\infty} | a_2(r)|\cdot e^{-\pi r}\sum_k k^{-1+\varepsilon}\cdot(24n+1,k)^{\frac12}\ll \sum_{\substack{d|24n+1\\d\le N}}d^{\frac12}\sum_{k\le\frac Nd}(dk)^{-1+\varepsilon}\\
&\ll \sum_{\substack{d|24n+1\\d\le N}}d^{-\frac12+\varepsilon}\int_{1}^{N/d}x^{-1+\varepsilon}dx= \sum_{\substack{d|24n+1\\d\le N}}d^{-\frac12}\cdot d^{\varepsilon}\cdot\left(\frac{N}{d} \right)^{\varepsilon} \ll \sum_{d|24n+1}d^{-\frac12}\cdot n^{\frac{\varepsilon}2} \\
&\ll n^{\epsilon+\frac{\varepsilon}2} \ll n^{\varepsilon}.
\end{align*}
For a proof of the fact that $ \sigma_0(n)=o(n^{\epsilon}) $ see, e.g., \cite[p. 296]{Apo}. Here we bound trivially $$\sum_{d|24n+1}d^{-\frac12}<\sum_{d|24n+1}1=\sigma_0(24n+1)=o(n^{\epsilon}) $$ and choose $ 0<\epsilon<\varepsilon/2, $ where $ \sigma_0(n) $ denotes, as usual, the number of divisors of $ n. $  
\par Since $ S_{22} $ and $ S_{23} $ are treated in the exact same way, we only consider $ S_{22}. $ Writing 
\[\int_{-\frac1{k(k_1+k)}}^{-\frac1{k(N+k)}}=\sum_{\ell=k_1+k}^{N+k-1}\int_{-\frac1{k\ell}}^{-\frac1{k(\ell+1)}}\] we see that 
\begin{multline}
\label{estimateS22}
S_{22}\ll \Bigg| \sum_{r=1}^{\infty} a_2(r) \sum_{c|k} \sum_{\ell=k_1+k}^{N+k-1}\int_{-\frac1{k\ell}}^{-\frac1{k(\ell+1)}} z^{-\frac12}\cdot e^{-\frac{2\pi }{kz}\left( r-\frac1{16}\right) +\frac{2\pi zn}{k}}d\Phi  \nonumber\\ \cdot   \tan\left(\frac{\pi a}{c} \right) \primesum\sum_{\substack{h\\N<k+k_1\le \ell}} \frac{\omega^2_{h,k}}{\omega_{2h,k}}\cdot\frac{1}{\sin\left(\frac{\pi ah'}{c} \right)}\cdot e^{-\frac{2\pi ih'a^2k_1}{c}-\frac{2\pi inh}{k}+\frac{2\pi i m_rh'}{k}} \Bigg|.
\end{multline}
Again, from basic facts of Farey theory, it follows that \[N-k<k_1,k_2\le N\quad\text{and}\quad k_1\equiv-k_2\equiv -h'\pmod*k,\]
conditions which imply the restriction of $ h' $ to one or two intervals in the range $ 0\le h'<k. $
Therefore we can use Lemma \ref{estimateKloosterman} to estimate the above expression just as in the case of $ S_{21}. $ \par As for the estimation of $ S_1, $ we can split the integration path into 
\[\int_{-\vartheta_{h,k}'}^{\vartheta_{h,k}''}=\int_{-\frac1{kN}}^{\frac1{kN}}-\int_{-\frac1{kN}}^{-\frac1{k(k_1+k)}}-\int_{\frac1{k(k_2+k)} }^{\frac1{kN}}\]
and denote the associated sums by $ S_{11}, $ $ S_{12} $ and $ S_{13}. $ The sums $ S_{12} $ and $ S_{13} $ contribute to the error term and, since they are of the same shape, we only consider $ S_{12}. $ Further, decomposing
\[\int_{-\frac1{kN}}^{-\frac1{k(k_1+k)}}=\sum_{\ell=N}^{k_1+k-1}\int_{-\frac1{k\ell}}^{-\frac1{k(\ell+1)}}\] gives 
\begin{multline}
S_{12}\ll \left|  \sum_{c|k}\sum_{\ell=N}^{k_1+k-1}\int_{-\frac1{k\ell}}^{-\frac1{k(\ell+1)}}  z^{-\frac12}\cdot e^{\frac{\pi }{8kz}+\frac{2\pi zn}{k}}d\Phi\right.  \nonumber\\ \cdot\left.   \tan\left(\frac{\pi a}{c} \right) \primesum\sum_{\substack{h\\\ell<k_1+k-1\le N+k-1}} \frac{\omega^2_{h,k}}{\omega_{2h,k}}\cdot\frac{1}{\sin\left(\frac{\pi ah'}{c} \right)}\cdot e^{-\frac{2\pi ih'a^2k_1}{c}-\frac{2\pi inh}{k}} \right|.
\end{multline}
Using the facts that $$ {\rm Re}(z)=\frac kn, \quad {\rm Re}\left(\frac 1z \right)<k \quad \text{and}\quad  |z|^2\ge \frac{k^2}{n^2}, $$ this sum can be estimated as before against $ O(n^{\varepsilon}). $ Thus, 
\begin{equation}\label{sigma2}
\sum_2=i\sum_{c|k}B_{a,c,k}(-n,0)\int_{-\frac1{kN}}^{\frac1{kN}}z^{-\frac12}\cdot e^{\frac{2\pi zn}{k}+\frac{\pi}{8kz}}d\Phi+O(n^{\varepsilon}).
\end{equation} 
\par We stop here for the moment with the estimation of $ \sum_2 $ and turn our attention to $ \sum_5. $ This sum is treated in a similar manner, but some comments regarding necessary modifications are in order. On noting that 
\begin{equation}\label{expressionU}
\cal U\left(a,b,c; q\right)=\frac{\eta\left(\frac z2 \right) }{\eta^2(z)}e^{\frac{\pi i a}{c}\left(\frac{4b}{c}-2s \right) }q^{\frac{s b}{c}-\frac{b^2}{c^2}}\Bigg(\sum_{m\ge0}\frac{e^{-\frac{\pi ia}{c}}q^{\frac{m(2m+1)}{2}+ms+\frac{b}{2c}} }{1-e^{-\frac{2\pi i a}{c}}q^{m+\frac bc}} -\sum_{m\ge1}\frac{e^{\frac{\pi ia}{c}}q^{\frac{m(2m+1)}{2}-ms-\frac{b}{2c}} }{1-e^{\frac{2\pi i a}{c}}q^{m-\frac bc}} \Bigg) ,
\end{equation}
we see that 
\begin{equation}\label{sum5}
e^{\frac{\pi ih'}{8k}-\frac{2\pi ia^2h'k_1}{c_1c}}\cdot \cal U\left(ah',\frac{\ell c}{c_1},c; q_1\right)
=\sum_{r\ge r_0}a_5(r)e^{\frac{2\pi im_rh'}{k}}e^{-\frac{\pi r}{kc_1^2z}}+e^{\frac{\pi i h'}{k}}\sum_{r\ge r_1}b_5(r)e^{\frac{2\pi in_rh'}{k}}e^{-\frac{\pi r}{kc_1^2z}},
\end{equation}
where $ m_r,n_r, r_0,r_1\in\bb Z. $ By the same argument as for $ S_{21}, $ one sees immediately that the part which might contribute to the main term can come only from those terms with $ r<0. $ A straightforward, but rather tedious, computation shows that such terms can arise only for $ s=0, $ $ m=0 $ in the first sum, respectively for  $ s=2, $ $ m=1 $ in the second sum obtained by expressing $\cal U\left(ah',\frac{\ell c}{c_1},c; q_1\right) $ as shown in \eqref{expressionU}.
In the former case, the contribution is given by 
\[e^{-\frac{2\pi ia^2h'k_1}{c_1c}+\frac{4\pi i ah'\ell}{c_1c}-\frac{\pi iah'}{c}}\cdot q_1^{-\frac1{16}-\frac{\ell^2}{c_1^2}+\frac{\ell}{2c_1}}\sum_{\substack{r\\\delta_{c,k,r}>0}}e^{-\frac{2\pi iah'r}{c}}\cdot q_1^{\frac{\ell r}{c_1}},\]
and, in the latter, by 
\[-e^{-\frac{2\pi ia^2h'k_1}{c_1c}+\frac{4\pi i ah'\l}{c_1c}-\frac{3\pi iah'}{c}}\cdot q_1^{-\frac1{16}-\frac{\ell^2}{c_1^2}+\frac{3\ell}{2c_1}-\frac12}\sum_{\substack{r\\\delta_{c,k,r}>0}}e^{\frac{2\pi iah'r}{c}}\cdot q_1^{\left( 1-\frac{\ell }{c_1}\right) r}.\]
To evaluate $ \sum_5, $ note that one can split the sum over $ k $ into groups based on the value $ k_1, $ which is defined in terms of $ c_1 $ and $ \ell. $ In each such group, the value of $ \delta_{c,k,r} $ (hence the condition $ \delta_{c,k,r}>0 $) is independent of $ k, $ and the number of terms satisfying $ \delta_{a,c,k,r}>0 $ is finite and bounded in terms of $ c_1 $ (hence of $ c $). Moreover, the coefficients $ a_5(r) $ and $ b_5(r) $ are independent of $ k $ in any such fixed group. Since the terms with $ r<0 $ from \eqref{sum5} can be estimated as in the case of $ S_{2}, $  we obtain
\begin{equation}\label{sigma5}
\sum_5=\sqrt2\tan\left(\frac{\pi a}{c} \right)\sum_{\substack{k,r\\c\nmid k,~2\nmid k,~c_1\ne4\\\delta_{c,k,r}>0}}\primesum\sum_{h}\frac{\omega^2_{h,k}}{\omega_{2h,k}} e^{\frac{2\pi i}{k}(-nh+m_{a,c,k,r}h')}\cdot\int_{-\vartheta'_{h,k}}^{\vartheta''_{h,k}} z^{-\frac12}\cdot e^{\frac{2\pi nz}{k}+\frac{2\pi}{kz}\delta_{c,k,r}}d\Phi+O(n^{\varepsilon}),
\end{equation} 
with $ \delta_{c,k,r} $ and $ m_{a,c,k,r} $ as defined in \eqref{delta} and \eqref{mackr}. In a completely similar way, we compute 
\begin{align*}\label{sigma6}
\sum_6&=\frac1{\sqrt2}\tan\left(\frac{\pi a}{c} \right)\sum_{\substack{k,r\\c\nmid k,~2\nmid k,~c_1=4\\\delta'_{c,k,r}>0}}\primesum\sum_{h}\frac{\omega^2_{h,k}}{\omega_{2h,k}}\cdot e^{\frac{2\pi i}{k}(-nh+m_{a,c,k,r}h')}\cdot\int_{-\vartheta'_{h,k}}^{\vartheta''_{h,k}} z^{-\frac12}\cdot e^{\frac{2\pi nz}{k}+\frac{2\pi}{kz}\delta_{c,k,r}}d\Phi\nonumber\nonumber\\&\phantom{=~}+\frac1{\sqrt2}\tan\left(\frac{\pi a}{c} \right)\sum_{\substack{k,r\\c\nmid k,~2\nmid k,~c_1=4\\\delta''_{c,k,r}>0}}\primesum\sum_{h}\frac{\omega^2_{h,k}}{\omega_{2h,k}}\cdot e^{\frac{2\pi i}{k}(-nh+m'_{a,c,k,r}h')}\cdot\int_{-\vartheta'_{h,k}}^{\vartheta''_{h,k}} z^{-\frac12}\cdot e^{\frac{2\pi nz}{k}+\frac{2\pi}{kz}\delta'_{c,k,r}}d\Phi\\&\phantom{=~}+O(n^{\varepsilon}),\nonumber
\end{align*}
where we define 
\begin{equation*}\label{deltaprime}
\delta'_{c,k,r}:=\begin{cases}
 \frac{1}{16}-\frac{3 \ell}{2c_1}+\frac{\ell^2}{c_1^2}-r\frac{\ell}{c_1} &\text{{if~}}0<\frac{\ell}{c_1}\le\frac14,\\
\frac{1}{16}-\frac{3\ell}{2c_1}+\frac{\ell^2}{c_1^2}+\frac12-r\left(1-\frac{\ell}{c_1} \right) & \text{if~}\frac14<\frac{\ell}{c_1}\le\frac34,\\
\frac{1}{16}-\frac{5\ell}{2c_1}+\frac{\ell^2}{c_1^2}+\frac32-r\left(1-\frac{\ell}{c_1} \right)  & \text{if~}\frac34<\frac{\ell}{c_1}<1,
\end{cases}
\end{equation*}
%
and

\begin{equation*}\label{mackrprime}
m'_{a,c,k,r}:=\begin{cases}
-\frac{1}{2c_1^2}( 2(ak_1-\l)^2+3c_1(ak_1-\l)+2rc_1(ak_1-\ell) ) &\text{{if~}}0<\frac{\ell}{c_1}\le\frac14,\\
-\frac{1}{2c_1^2}( 2(ak_1-\l)^2+3c_1(ak_1-\l)-2rc_1(ak_1-\l)-c_1^2(2r-1) ) & \text{if~}\frac14<\frac{\ell}{c_1}\le\frac34,\\
-\frac{1}{2c_1^2}( 2(ak_1-\l)^2+5c_1(ak_1-\l)-2rc_1(ak_1-\l)-c_1^2(2r-3) ) & \text{if~}\frac34<\frac{\ell}{c_1}<1.
\end{cases}
\end{equation*}

An easy computation shows that if $ c_1=4, $ then $ \delta_{a,c,k,r}\le0 $ for all $ r\ge0, $ and that $ \delta_{a,c,k,r}'>0 $ if and only if $ r=0, $ $ m=1, $ $ s=1 $ and $ \l=2, $ case which is impossible as it leads to $ ak_1\equiv2\pmod*4, $ and by assumption $ k $ is odd, while the condition $ (a,c)=1 $ implies that $ a $ is odd as well. Therefore $ \sum_6 $ will only contribute to the error term.  
\par To finish the estimation of these sums, we are only left with computing integrals of the form 
\[\mathcal{I}_{k,v}=\int_{-\frac1{kN}}^{\frac1{kN}}z^{-\frac12}\cdot e^{\frac{2\pi }{k}\left(nz+\frac vz \right) }d\Phi,\]
which, upon substituting $ z=\frac kn-ik\Phi, $ are equal to 
\begin{equation}
\label{integralsIhr}
\mathcal{I}_{k,v}=\frac1{ki}\int_{\frac kn-\frac iN}^{\frac kn+\frac iN}z^{-\frac12}\cdot e^{\frac{2\pi }{k}\left(nz+\frac vz \right) } dz.
\end{equation}
\par To compute these integrals, we proceed in the way described by Dragonette \cite[p. 492]{Dragon} and made more precise by Bringmann \cite[p. 3497]{B}. In doing so, we enclose the path of integration by including the smaller arc of the circle through $ \frac kn\pm \frac iN $ and tangent to the imaginary axis at 0, which we denote by $ \Gamma. $ If $ z=x+iy ,$ then $ \Gamma $ is given by $ x^2+y^2=w x, $ with $ w=\frac kn+\frac n{N^2k}. $  Using the fact that $ 2>w>\frac 1k, $ $ \RE (z)\le \frac kn $ and $ \RE \left(\frac 1z \right)<k  $ on the smaller arc, the integral along this arc is seen to be of order $ O\big(n^{-\frac34}\big ).  $ By Cauchy's Theorem, the path of integration in \eqref{integralsIhr} can be further changed  into the larger arc of $ \Gamma, $ hence
\[\mathcal{I}_{k,v}=\frac1{ki}\int_{\frac kn-\frac iN}^{\frac kn+\frac iN}z^{-\frac12}\cdot e^{\frac{2\pi }{k}\left(nz+\frac vz \right) } dz+O\big( n^{-\frac18}\big). \]
Making the substitution $ t=\frac{2\pi v}{kz} $ gives 
\[\mathcal{I}_{k,v}=\frac{2\pi}{k}\left( \frac{2\pi v}{k}\right)^{\frac12}\frac1{2\pi i} \int_{\gamma-i\infty}^{\gamma+i\infty}t^{-\frac32}\cdot e^{t+\frac{\alpha }{t} } dt+O\big( n^{-\frac18}\big),\]
where $ \gamma\in\bb R $ and $ \alpha=\frac{4\pi^2 vn}{k^2}. $ Using the Hankel integral formula, we compute (see, e.g, \cite[\S 3.7 and \S 6.2]{Watson}) 
\begin{equation*}
\frac{1}{2\pi i}\int_{\gamma-i\infty}^{\gamma+i\infty}t^{-\frac32}\cdot e^{t+\frac{\alpha}{t}}dt=\frac1{\sqrt{\pi \alpha}}\cdot \sinh(2\sqrt{\alpha}),
\end{equation*}
hence 
\begin{equation}
\label{Ihk}
\mathcal{I}_{k,v}=\sqrt{\frac{2}{kn}}\cdot\sinh\left( \frac{4\pi \sqrt{vn} }{k}\right)+O\big( n^{-\frac18}\big).
\end{equation}
On applying \eqref{Ihk} to \eqref{sigma2} and \eqref{sigma5} for $ v=\frac1{16} $ and $ v=\delta_{c,k,r} $ respectively, we have
%
\begin{align*}\label{maincontribution}
\sum_2+\sum_5+\sum_6&=i\sqrt{\frac 2n}\sum_{\substack{1\le k\le\sqrt n\\2\nmid k,~c|k}}\frac{B_{a,c,k}(-n,0)}{\sqrt k}\cdot\sinh\left( \frac{\pi\sqrt n}{k}\right)\nonumber \\
&\phantom{=~}+2\sqrt{\frac 2n}\sum_{\substack{1\le k\le \sqrt n\\c\nmid k,~2\nmid k,~c_1\ne4\\r\ge0,~\delta_{c,k,r}>0}}\frac{D_{a,c,k}(-n,m_{a,c,k,r})}{\sqrt k}\cdot \sinh \left( \frac{4\pi\sqrt{\delta_{c,k,r}n}}{k} \right)+O(n^{\varepsilon}).
\end{align*}

\subsection{Estimates for the sums $ \sum_1,\sum_3$ and $\sum_4 $}
We show that these sums contribute only to the error term. Let us start our discussion with $ \sum_1, $ which equals 
\[\sum_1=i\tan \left(\frac{\pi a}{c} \right) \sum_{\substack{h,k\\2|k,~c|k}}\frac{\omega^2_{h,k}}{\omega_{h,k/2}}(-1)^{k_1+1}\cot\left(\frac{\pi ah'}{c} \right)e^{-\frac{2\pi ia^2h'k_1}{c}-\frac{2\pi inh}{k}}\int_{-\vartheta'_{h,k}}^{\vartheta''_{h,k}} z^{-\frac12}\cdot e^{\frac{2\pi nz}{k}}\cdot \cal O\left(\frac{ah'}{c}; q_1\right)d\Phi.\]
Although not written down explicitly in \cite{BJoy}, one can readily see, e.g., by inspecting the proof of Theorem 2.1 from \cite[pp.~11--17]{BJoy}, that 
\begin{align*}
\cal O\left(\frac{ah'}{c}; q_1\right)&=4\sin^2\left(\frac{\pi ah'}{c} \right)\frac{\eta(2z_1)}{\eta(z_1^2)}\sum_{n\in\bb Z}\frac{(-1)^nq_1^{n^2+n}}{1-2q_1^n\cos\left(\frac{2\pi ah'}{c} \right)+q^{2n}_1 }\\&=\frac{\eta(2z_1)}{\eta(z_1^2)}\left( 1+8\sin^2\left(\frac{\pi ah'}{c} \right)\sum_{n\ge1}\frac{(-1)^nq_1^{n^2+n}}{1-2q_1^n\cos\left(\frac{2\pi ah'}{c} \right)+q^{2n}_1 }\right)  
\\&=\ol P(q_1)\left( 1+8\sin^2\left(\frac{\pi ah'}{c} \right)\sum_{n\ge1}\frac{(-1)^nq_1^{n^2+n}}{1-2q_1^n\cos\left(\frac{2\pi ah'}{c} \right)+q^{2n}_1 }\right),
\end{align*} where we set $ q_1=e^{2\pi iz_1}. $
We can rewrite this as 
\[\cal O\left(\frac{ah'}{c}; q_1\right)=1+\sum_{r\ge1}a_1(r)\cdot e^{\frac{2\pi im_rh'}{k}}\cdot e^{-\frac{2\pi r}{kz}},\]
with $ m_r\in  \bb Z $ and the coefficients $ a_1(r) $ being independent of $ k $ and $ h. $ 
Now the sum coming from $ r\ge1 $ will go, as we have seen in the case of $ S_2, $ into an error term of the form $ O(n^{\varepsilon}), $ hence 
\begin{equation*}\label{sum1}
\sum_1= i\tan \left(\frac{\pi a}{c} \right) \sum_{\substack{h,k\\2|k,~c|k}}\frac{\omega^2_{h,k}}{\omega_{h,k/2}}(-1)^{k_1+1}\cot\left(\frac{\pi ah'}{c} \right)e^{-\frac{2\pi ia^2h'k_1}{c}-\frac{2\pi inh}{k}}\int_{-\vartheta'_{h,k}}^{\vartheta''_{h,k}} z^{-\frac12}\cdot e^{\frac{2\pi nz}{k}} d\Phi + O(n^{\varepsilon}).
\end{equation*} As for the sum coming from the constant term, let us denote it simply by $ S, $ 
on splitting the path of integration exactly as in the case of $ S_1 $ and working out the estimates in a similar manner, we obtain
\begin{equation*}
S=i\sum_{c|k,~2|k}A_{a,c,k}(-n,0)\int_{-\frac1{kN}}^{\frac1{kN}}z^{-\frac12}\cdot e^{\frac{2\pi nz }{k} }d\Phi+ O(n^{\varepsilon}).
\end{equation*}
By applying part (iii) of Lemma \ref{estimateKloosterman} and arguing as in the case of $ S_{21} $ (except that now $ m_r=0 $), we get  
\begin{align*}
\left| i\sum_{c|k,~2|k}A_{a,c,k}(-n,0)\int_{-\frac1{kN}}^{\frac1{kN}}z^{-\frac12}\cdot e^{\frac{2\pi nz }{k} }d\Phi\right| &\ll \sum_k k^{\frac12+\varepsilon}\cdot(24n+1,k)^{\frac12}\cdot \frac1{k(N+1)}n^{\frac12}k^{-\frac12}\\&\ll \sum_k k^{-1+\varepsilon}\cdot(24n+1,k)^{\frac12} 
\ll \sum_{\substack{d|24n+1\\d\le N}}d^{-\frac12+\varepsilon}\int_{1}^{N/d}x^{-1+\varepsilon}dx\\&\ll \sum_{\substack{d|24n+1\\d\le N}}d^{-\frac12}\cdot d^{\varepsilon}\cdot\left(\frac{N}{d} \right)^{\varepsilon}  \ll n^{\varepsilon},
\end{align*} proving the claim.
\par We next deal with $ \sum_3 $ and $ \sum_4. $ The reader interested in writing down the computations explicitly will see that the two sums can be expressed as
\[\cal O\left(ah',\frac{\ell c}{c_1},c; q_1\right)=\sum_{r\ge0} a_3(r)\cdot e^{\frac{2\pi i m_rh'}{k}}\cdot e^{-\frac{\pi r}{kc_1^2z}} \]
and 
\[{\cal V}\left(\frac{ah'}{c};q_1 \right)=\sum_{r\ge0} a_4(r)\cdot e^{\frac{2\pi in_rh' }{k}}\cdot e^{-\frac{(2r+1)\pi}{4kz}}, \]
where $ m_r,n_r\in \bb Z $ and the coefficients $ a_3(r) $ and $ a_4(r) $ are independent of $ k $ and $ h. $ Since $ r\ge 0 $, it is obvious that both sums will be of order $ O(n^{\varepsilon}), $ the argument being the same as for $ S_2. $

\subsection{Estimates for the sums $ \sum_7 $ and $ \sum_8 $}
The estimation of the remaining sums $ \sum_7 $ and $ \sum_8 $ is not difficult and is inspired by Bringmann \cite[p. 3497]{B}. Let us, however, elaborate a bit more here. Again, we split the path of integration as in \eqref{splitintegralRademacher}. The resulting sums can each be bounded on the various intervals of integration by 
\begin{equation}
\left( \sum_{k}k^{-1}\right)   \left( \primesum\sum_{h}1 \right) \cdot \sum_{\nu=0}^{k-1} k^{-1}\cdot N^{-1}\cdot z^{\frac12}\cdot I_{a,c,k,v}(z)\ll N^{-1}\cdot n^{\frac12}\cdot k^{-\frac12}\cdot g_{a,c,k,\nu}\ll k^{\varepsilon}\ll n^{\varepsilon},
\end{equation} 
for any $ \varepsilon>0. $ Here we have used, in turn, a trivial bound for the Kloosterman sums appearing in front of the integrals from $ \sum_7 $ and $ \sum_8, $ Lemma \ref{LemmawithMordell}, and the easy estimate 
\[\sum_{1\le \nu\le k}g_{a,c,k,\nu}\ll \sum_{1\le \nu \le4ck}\frac1{\nu}\ll k^{\varepsilon}.\]
By this we conclude the rather lengthy proof of Theorem \ref{MainThm}.
\end{proof}
\begin{proof}[Proof of Corollary \ref{Corollary2}]
Let us first assume $ c>2. $ On combining Theorem \ref{MainThm} and identity \eqref{orthogonality}, we obtain 
\begin{align}\label{Nacn}
\overline N(a,c,n)&=\frac1c\sum_{j=1}^{c-1}\zeta_c^{-aj}\Bigg(  i\sqrt{\frac 2n}\sum_{c'| k,~2\nmid k}\frac{B_{j',c',k}(-n,0)}{\sqrt k}\cdot \sinh\left( \frac{\pi\sqrt n}{k}\right)  \nonumber\\
&\phantom{=~} +2\sqrt{\frac 2n}\sum_{\substack{c'\nmid k,~2\nmid k,~\widetilde c\ne4\\r\ge0,~ \delta_{c',k,r}>0}}\frac{D_{j',c',k}(-n,m_{j',c',k,r})}{\sqrt k} \cdot\sinh \left( \frac{4\pi\sqrt{\delta_{c',k,r}n}}{k} \right)\Bigg)\nonumber\\
&\phantom{=~}+\frac{\overline p(n)}c  +O_c(n^{\varepsilon}),
\end{align} 
where $ c'=\frac c{(c,j)}, $ $ j'=\frac j{(c,j)},$  $\widetilde c=\frac{c'}{(c',k)} $    and $ \varepsilon>0 $ is arbitrary. As $ n\to\infty, $ we know that  \[\overline p(n)\sim \frac1{8n}e^{\pi\sqrt n}.\] Since $ c'\ge 2 $ (as $ j\le c-1), $ summation of the $ B_{j',c',k} $ terms in \eqref{Nacn} can only start from $ k=3, $ meaning that the asymptotic contribution of these sums is (at most) of order $\sinh\left( \frac{\pi\sqrt n}{3}\right),  $ thus dominated by $ \ol p(n). $   $  $ \par We claim that the same is true for the contribution coming from the $ D_{j',c',k} $ sums.
For this, note that, directly from the definition \eqref{delta}, it follows that $ \delta_{c,k,r}\le \frac1{16}, $ therefore \[\sinh \left( \frac{4\pi\sqrt{\delta_{c,k,r}n}}{k} \right)\le \sinh\left( \frac{\pi\sqrt n}{k}\right).\]
If summation of the $ D_{j',c',k} $ terms in \eqref{Nacn} starts from $ k=3 $ (note that $ 2\nmid k $), then there is nothing to prove; so assume $ k=1. $ It is an easy exercise to prove that equality above cannot be, in fact, obtained, and that, since $ c_1=c,$ we have   $ \delta_{c,k,r}\le \frac1{16}-\frac1{2c}+\frac1{c^2}=\frac1{16}-\frac{c-2}{2c^2}, $ with $ c\ge3,$ thereby proving the claim. 
\par In case $ c=2, $ we leave it as an exercise, to the interested reader, to prove that the coefficients of $ \cal O(-1;q) $ are of order $ O(n^{\varepsilon}) $ and are thus dominated by $ \ol p(n). $ This can be done by using the transformation behavior described in \cite[Corollary 4.2]{BJoy} and carrying out estimates similar to those from the proof of Theorem \ref{MainThm}.
\end{proof}
\section{A few inequalities}\label{Ineqs}
In this section we prove the inequalities stated in Theorems \ref{Thm2}--\ref{Thm4}.~We will elaborate more on Theorem \ref{Thm2}, while only sketching the main steps in the proofs of Theorems \ref{Thm3} and \ref{Thm4}, as the ideas are similar. 
\par Before giving the proof of Theorem \ref{Thm2}, we must establish some identities. The following is an easy generalization of \cite[Lemma 3.1]{JZZ}.
\begin{Lem}\label{Lemma:zetarelations10}
If $ a\in\N $ is odd and $ 5\nmid a $, then 
\begin{align*}\label{N1234}
\O(\zeta_{10}^a;q)&=\sum_{n=0}^{\infty}(\overline N(0,10,n)+\overline N(1,10,n)-\overline N(4,10,n)-\overline N(5,10,n))q^n\nonumber\\
&\phantom{=~}+(\zeta_{10}^{2a}-\zeta_{10}^{3a})\sum_{n=0}^{\infty}(\overline N(1,10,n)+\overline N(2,10,n)-\overline N(3,10,n)-\overline N(4,10,n))q^n.
\end{align*}
\end{Lem}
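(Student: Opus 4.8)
The plan is to evaluate the finite sum $\O(\zeta_{10}^a;q)$ at the $10$-th root of unity $u=\zeta_{10}^a$ by using the orthogonality-type expansion that expresses $\O(u;q)$ in terms of the rank counts $\overline N(m,n)$, together with the periodicity $\overline N(m,n)=\overline N(m',n)$ whenever $m\equiv m'\pmod{10}$ (which follows for the overpartition rank exactly as for the ordinary Dyson rank, from the symmetry $\overline N(m,n)=\overline N(-m,n)$ and the defining generating function). Concretely, I would start from
\[
\O(\zeta_{10}^a;q)=\sum_{n\ge0}\Bigg(\sum_{m=-\infty}^{\infty}\overline N(m,n)\zeta_{10}^{am}\Bigg)q^n
=\sum_{n\ge0}\Bigg(\sum_{j=0}^{9}\overline N(j,10,n)\,\zeta_{10}^{aj}\Bigg)q^n,
\]
so that the whole identity reduces to a single identity among the ten residue-class coefficients $\overline N(j,10,n)$ and the powers $\zeta_{10}^{aj}$.

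The key algebraic step is then to simplify $\sum_{j=0}^{9}\overline N(j,10,n)\zeta_{10}^{aj}$ using $\overline N(j,10,n)=\overline N(10-j,10,n)$, which pairs $j$ with $10-j$ and collapses the sum to the classes $j=0,1,2,3,4,5$. Writing $\xi=\zeta_{10}^{a}$, one gets a combination of the form $\overline N(0)+\overline N(5)\xi^{5}+(\overline N(1))(\xi+\xi^{9})+(\overline N(2))(\xi^{2}+\xi^{8})+(\overline N(3))(\xi^{3}+\xi^{7})+(\overline N(4))(\xi^{4}+\xi^{6})$, where I suppress the arguments $(\cdot\,,10,n)$. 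Since $a$ is odd and $5\nmid a$, we have $\xi^{5}=\zeta_{10}^{5a}=(-1)^{a}=-1$; this is the one place the hypotheses on $a$ enter, and it is what makes the constant coefficient read $\overline N(0)-\overline N(5)$. The remaining work is to reorganize the cosine-type sums $\xi^{k}+\xi^{-k}$ and regroup them, using the minimal polynomial relations for $\zeta_{10}$ (equivalently $1+\xi^{2}+\xi^{4}+\xi^{6}+\xi^{8}=0$ and $\xi^{5}=-1$), into exactly the two groups appearing in the statement: the $\xi$-independent block $\overline N(0)+\overline N(1)-\overline N(4)-\overline N(5)$ and the block multiplied by $\zeta_{10}^{2a}-\zeta_{10}^{3a}$, namely $\overline N(1)+\overline N(2)-\overline N(3)-\overline N(4)$.

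I expect the main obstacle to be purely bookkeeping: verifying that the coefficient of each power $\xi^{j}$ on the right-hand side, after expanding $(\zeta_{10}^{2a}-\zeta_{10}^{3a})$ and reducing all exponents modulo $10$ and applying $\xi^5=-1$, matches the coefficient extracted on the left. This is a finite linear-algebra check over $\Z[\xi]/(\Phi_{10}(\xi))$ and is guaranteed to work because Ji--Zhang--Zhao prove the $a=1$ case in \cite[Lemma 3.1]{JZZ}; the point of the generalization is only that the computation depends on $a$ solely through $\zeta_{10}^{a}$ being a primitive $10$-th root of unity, i.e. through $a$ odd and $5\nmid a$. I would therefore present the proof as: (1) write $\O(\zeta_{10}^a;q)$ as the residue-class sum; (2) fold via $\overline N(j,10,n)=\overline N(10-j,10,n)$ and use $\zeta_{10}^{5a}=-1$; (3) match coefficients against the claimed right-hand side using the cyclotomic relation for $\zeta_{10}$, noting this is the verbatim computation of \cite[Lemma 3.1]{JZZ} with $\zeta_{10}$ in place of $\zeta_{10}^{1}$. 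No genuinely new idea beyond \cite{JZZ} is needed.
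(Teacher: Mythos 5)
Your proposal follows essentially the same route as the paper: substitute $u=\zeta_{10}^a$ into the rank generating function, group the coefficients by residue class modulo $10$, fold via $\overline N(j,10,n)=\overline N(10-j,10,n)$ (from $\overline N(m,n)=\overline N(-m,n)$), and use $\zeta_{10}^{5a}=-1$ together with $1-\zeta_{10}^{a}+\zeta_{10}^{2a}-\zeta_{10}^{3a}+\zeta_{10}^{4a}=0$ to regroup into the two stated blocks, exactly as in the paper's direct generalization of \cite[Lemma 3.1]{JZZ}. One small wording slip does not affect the argument: $\overline N(m,n)$ is not periodic in $m$ modulo $10$; the collapse to residue classes only uses that $\zeta_{10}^{am}$ depends on $m$ modulo $10$ and the definition of $\overline N(j,10,n)$.
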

\begin{proof}
Plugging $ u=\zeta^a_{10} $ into \eqref{overpartsrank} gives
\begin{equation}\label{zeta10a}
\O(\zeta_{10}^a;q)=\sum_{n=0}^{\infty}\overline{N}(m,n)\zeta_{10}^{am}q^n=\frac{(-q)_{\infty}}{(q)_{\infty}}\sum_{n=-\infty}^{\infty}\frac{(1-\zeta_{10}^a)(1-\zeta_{10}^{-a})(-1)^nq^{n^2+n}}{(1-\zeta_{10}^aq^n)(1-\zeta_{10}^{-a}q^n)}.  
\end{equation}
Using the fact that $ \overline N(a,m,n)=\overline N(m-a,m,n), $ which can be easily deduced from $ \ol N(m,n)=\ol N(-m,n) $ (see, e.g., \cite[Proposition 1.1]{Lovejoy}), and noting that $ \zeta_{10}^{5a}=-1 $ and $ 1-\zeta_{10}^a+\zeta^{2a}_{10}-\zeta_{10}^{3a}+\zeta_{10}^{4a}=0  $ for $ 5\nmid a $ odd,  we can rewrite \eqref{zeta10a} as
\begin{align*}
\O(\zeta_{10}^a;q)&=\sum_{n=0}^{\infty}\sum_{\ell=0}^9\overline N(\ell ,10,n)\zeta_{10}^{\ell a}q^n\\
&=\sum_{n=0}^{\infty}(\overline N(0,10,n)+(\zeta_{10}^a-\zeta_{10}^{4a})\overline N(1,10,n)+(\zeta_{10}^{2a}-\zeta_{10}^{3a})\overline N(2,10,n)\\
&\phantom{=~}+(\zeta_{10}^{3a}-\zeta_{10}^{2a})\overline N(3,10,n)+(\zeta_{10}^{4a}-\zeta_{10}^{a})\overline N(4,10,n)-\overline N(5,10,n))q^n\\
&=\sum_{n=0}^{\infty}(\overline N(0,10,n)+(1+\zeta_{10}^{2a}-\zeta_{10}^{3a})\overline N(1,10,n)+(\zeta_{10}^{2a}-\zeta_{10}^{3a})\overline N(2,10,n)\\
&\phantom{=~}+(\zeta_{10}^{3a}-\zeta_{10}^{2a})\overline N(3,10,n)-(1+\zeta_{10}^{2a}-\zeta_{10}^{3a})\overline N(4,10,n)-\overline N(5,10,n))q^n\\
&=\sum_{n=0}^{\infty}(\overline N(0,10,n)+\overline N(1,10,n)+\overline N(4,10,n)-\overline N(5,10,n)q^n\\
&\phantom{=~}+(\zeta_{10}^{2a}-\zeta_{10}^{3a})\sum_{n=0}^{\infty}(\overline N(1,10,n)+\overline N(2,10,n)-\overline N(3,10,n)-\overline N(4,10,n))q^n,
\end{align*}
which concludes the proof.
\end{proof}
In a similar fashion, we have the following result. For a proof of the case $ a=1, $ see \cite[Lemma 2.1]{JZZ}. 
\begin{Lem}\label{Lemma:zetarelations6}
If $ a\in\N $ is odd and $ 3\nmid a, $ then 
\begin{equation*}
\O(\zeta_{6}^a;q)=\sum_{n=0}^{\infty}(\overline N(0,6,n)+\overline N(1,6,n)-\overline N(2,6,n)-\overline N(3,6,n))q^n.
\end{equation*}
\end{Lem}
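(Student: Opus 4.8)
The plan is to transcribe the proof of Lemma~\ref{Lemma:zetarelations10} almost word for word, changing only the modulus and the cyclotomic relation in play. First I would substitute $u=\zeta_6^a$ into \eqref{overpartsrank}. The hypothesis that $a$ is odd and $3\nmid a$ is exactly the statement $(a,6)=1$, so $\zeta_6^a$ is a primitive sixth root of unity. Grouping the coefficients of $q^n$ in $\sum_{m\in\Z}\overline N(m,n)\zeta_6^{am}$ by the residue class of $m$ modulo $6$ yields
\[\O(\zeta_6^a;q)=\sum_{n=0}^{\infty}\sum_{\ell=0}^{5}\overline N(\ell,6,n)\,\zeta_6^{\ell a}\,q^n,\]
after which I would invoke the symmetry $\overline N(\ell,6,n)=\overline N(6-\ell,6,n)$, a consequence of $\overline N(m,n)=\overline N(-m,n)$ (cf.\ \cite[Proposition~1.1]{Lovejoy}), to pair the $\ell=1$ term with the $\ell=5$ term, and the $\ell=2$ term with the $\ell=4$ term.

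Next I would record the two elementary facts needed about $\zeta_6^a$: being a primitive sixth root of unity, it is a zero of the cyclotomic polynomial $x^2-x+1$, so $1-\zeta_6^a+\zeta_6^{2a}=0$; and $\zeta_6^{3a}=(-1)^a=-1$ since $a$ is odd. Dividing the first relation by $\zeta_6^a$ gives $\zeta_6^a+\zeta_6^{-a}=1$, and squaring this identity gives $\zeta_6^{2a}+2+\zeta_6^{-2a}=1$, i.e.\ $\zeta_6^{2a}+\zeta_6^{-2a}=-1$. Substituting these into the regrouped sum, the coefficient of $\overline N(0,6,n)$ is $1$, the coefficient of $\overline N(1,6,n)$ is $\zeta_6^a+\zeta_6^{5a}=\zeta_6^a+\zeta_6^{-a}=1$, the coefficient of $\overline N(2,6,n)$ is $\zeta_6^{2a}+\zeta_6^{4a}=\zeta_6^{2a}+\zeta_6^{-2a}=-1$, and the coefficient of $\overline N(3,6,n)$ is $\zeta_6^{3a}=-1$; this is precisely the asserted identity.

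There is no genuine obstacle in this argument: the hardest part is merely the bookkeeping of which powers of $\zeta_6$ occur, and it is worth sanity-checking the resulting formula against the already known case $a=1$ of \cite[Lemma~2.1]{JZZ}. One could equally well avoid introducing $\zeta_6^{-a}$ altogether by reducing every power of $\zeta_6^a$ modulo $x^2-x+1$, i.e.\ writing $\zeta_6^{2a}=\zeta_6^a-1$, $\zeta_6^{3a}=-1$, $\zeta_6^{4a}=-\zeta_6^a$ and $\zeta_6^{5a}=1-\zeta_6^a$, and then observing that all contributions of $\zeta_6^a$ cancel; either route is a one-line computation.
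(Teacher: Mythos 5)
Your proof is correct, and it is exactly the argument the paper intends: the paper gives no separate proof of this lemma but declares it analogous to Lemma \ref{Lemma:zetarelations10} (substituting the root of unity into \eqref{overpartsrank}, grouping by residues modulo $6$, using $\overline N(m,n)=\overline N(-m,n)$ and the cyclotomic relations $1-\zeta_6^a+\zeta_6^{2a}=0$, $\zeta_6^{3a}=-1$), which is precisely what you carry out. The coefficient bookkeeping ($\zeta_6^a+\zeta_6^{-a}=1$, $\zeta_6^{2a}+\zeta_6^{-2a}=-1$) checks out and matches the known case $a=1$ from \cite[Lemma 2.1]{JZZ}.
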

\begin{proof}[Proof of Theorem \ref{Thm2}]
Setting $ a=1 $ and $ a=3 $ in Lemma \ref{Lemma:zetarelations10}, we obtain 
\begin{align}\label{rootunitya=1}
\O(\zeta_{10};q)
&=\sum_{n=0}^{\infty}(\overline N(0,10,n)+\overline N(1,10,n)-\overline N(4,10,n)-\overline N(5,10,n))q^n\nonumber\\
&\phantom{=~}+(\zeta_{10}^{2}-\zeta_{10}^{3})\sum_{n=0}^{\infty}(\overline N(1,10,n)+\overline N(2,10,n)-\overline N(3,10,n)-\overline N(4,10,n))q^n,
\end{align} and 
\begin{align}\label{rootunitya=3}
\O(\zeta_{10}^3;q)
&=\sum_{n=0}^{\infty}(\overline N(0,10,n)+\overline N(1,10,n)-\overline N(4,10,n)-\overline N(5,10,n))q^n\nonumber\\
&\phantom{=~}+(\zeta_{10}^{4}-\zeta_{10})\sum_{n=0}^{\infty}(\overline N(1,10,n)+\overline N(2,10,n)-\overline N(3,10,n)-\overline N(4,10,n))q^n.
\end{align}
Subtracting \eqref{rootunitya=3} from \eqref{rootunitya=1} yields
\[\sum_{n=0}^{\infty}(\overline N(1,10,n)+\overline N(2,10,n)-\overline N(3,10,n)-\overline N(4,10,n))q^n=\frac{\O(\zeta_{10};q)-\O(\zeta_{10}^3;q)}{\zeta_{10}+\zeta_{10}^2-\zeta_{10}^3-\zeta_{10}^4}=\frac{\O(\zeta_{10};q)-\O(\zeta_{10}^3;q)}{1+4\cos\left( \frac{2\pi}{5}\right) },\]
thus proving \eqref{eq:1234} is equivalent to showing that, for $ n\ge0, $
\[A\left(\frac1{10};n \right)\ge A\left(\frac3{10};n\right) . \] 
For $ a=1,$ $c=10 $ we have $ m_{1,10,1,0}=0 $ and $ \delta_{c,k,r}>0$ if and only if $ r=0, $ in which case $ \delta_{c,k,r}=\frac9{400}, $ hence
\begin{equation}\label{A1/10}
A\left(\frac1{10};n \right)= 2\sqrt{\frac 2n}\sum_{\substack{1\le k\le \sqrt n\\ k\equiv1,9\pmod*{10}}}\frac{D_{a,c,k}(-n,m_{1,10,k,0})}{\sqrt k}\cdot \sinh \left( \frac{3\pi\sqrt{n}}{5k} \right)+O_c(n^{\varepsilon}),
\end{equation} 
whereas, for $ a=3$ and $c=10, $ we have $ \delta_{c,k,r}>0$ if and only if $r=0, $ in which case $ \delta_{c,k,r}=\frac9{400}, $ thus 
\begin{equation}\label{A3/10}
A\left(\frac3{10};n \right)= 2\sqrt{\frac 2n}\sum_{\substack{1\le k\le \sqrt n\\ k\equiv3,7\pmod*{10}}}\frac{D_{a,c,k}(-n,m_{3,10,k,0})}{\sqrt k}\cdot \sinh \left( \frac{3\pi\sqrt{n}}{5k} \right)+O_c(n^{\varepsilon}).
\end{equation}
We further compute $$ D_{1,10,1}(-n,0)=\frac1{\sqrt2}\tan\left( \frac{\pi}{10}\right),  $$ and so the term corresponding to $ k=1 $ in the sum from \eqref{A1/10} is given by $$\frac2{\sqrt n}\tan\left( \frac{\pi}{10}\right)\sinh \left( \frac{3\pi\sqrt{n}}{5} \right). $$
Using a trivial bound for the Kloosterman sum from \eqref{A3/10} and taking into account the contributions coming from the various error terms involved, estimates which we make explicit at the end of this section, we see that this term is dominant for $ n\ge1030, $ hence 
\[A\left(\frac1{10};n \right)\ge A\left(\frac3{10};n\right)  \]
for $ n\ge1030. $ In Mathematica we see that the inequality is true for $ n<1030 $ as well.
\par To prove \eqref{eq:0325}, we set
$ a=1 $ and $ a=3 $ in Lemma \ref{Lemma:zetarelations10} and obtain
\begin{align}\label{rootunitya=1rewrite}
\O(\zeta_{10};q)
&=\sum_{n=0}^{\infty}(\overline N(0,10,n)+\overline N(3,10,n)-\overline N(2,10,n)-\overline N(5,10,n))q^n\nonumber\\
&\phantom{=~}+(1+\zeta_{10}^{2}-\zeta_{10}^{3})\sum_{n=0}^{\infty}(\overline N(1,10,n)+\overline N(2,10,n)-\overline N(3,10,n)-\overline N(4,10,n))q^n,
\end{align} and 
\begin{align}\label{rootunitya=3rewrite}
\O(\zeta_{10}^3;q)
&=\sum_{n=0}^{\infty}(\overline N(0,10,n)+\overline N(3,10,n)-\overline N(2,10,n)-\overline N(5,10,n))q^n\nonumber\\
&\phantom{=~}+(1-\zeta_{10}+\zeta_{10}^{4})\sum_{n=0}^{\infty}(\overline N(1,10,n)+\overline N(2,10,n)-\overline N(3,10,n)-\overline N(4,10,n))q^n.
\end{align}
Combining \eqref{rootunitya=1rewrite} and \eqref{rootunitya=3rewrite} and setting $ \alpha=\frac{1+\zeta_{10}^{2}-\zeta_{10}^{3}}{1-\zeta_{10}+\zeta_{10}^{4}}, $ we obtain
\begin{equation*}
 \O(\zeta_{10};q)-\alpha\cdot\O(\zeta_{10}^3;q) =(1-\alpha) \sum_{n=0}^{\infty}(\overline N(0,10,n)+\overline N(3,10,n)-\overline N(2,10,n)-\overline N(5,10,n))q^n,
\end{equation*} 
hence, as it is easy to see that $ \alpha=-( 1+2\cos(\pi/5)) , $  proving the claim amounts to showing
\begin{equation*}
A\left(\frac1{10};n \right)+\left(1+2\cos\frac{\pi}{5} \right)A\left( \frac3{10};n\right)\ge0 
\end{equation*}
for all $ n\ge0, $ which follows from the estimates used for proving \eqref{eq:1234}. 
The proof of \eqref{eq:0145} follows simply on adding the inequalities \eqref{eq:1234} and \eqref{eq:0325}.
\end{proof}
We can also sketch now the proofs of Theorems \ref{Thm3} and \ref{Thm4}.
\begin{proof}[Proof of Theorem \ref{Thm3} (Sketch)]
Reasoning along the same lines, on setting $ a=1 $ in Lemma \ref{Lemma:zetarelations6} and recalling \eqref{OA}, the claim is equivalent to proving \[A\left(\frac1{6};n \right)\ge0 \] for $ n\ge0. $ It is easy to see that, for $ a=1$ and  $c=6, $ we have $ m_{1,6,1,0}=0 $ and $ \delta_{c,k,r}>0$ if and only if $ r=0, $ in which case $ \delta_{c,k,r}=\frac1{144}, $ thus the dominant term of 
\begin{equation*}
A\left(\frac1{6};n \right)= 2\sqrt{\frac 2n}\sum_{\substack{1\le k\le \sqrt n\\ k\equiv1,5\pmod*{6}}}\frac{D_{1,6,k}(-n,m_{1,6,k,0})}{\sqrt k}\cdot \sinh \left( \frac{3\pi\sqrt{n}}{5k} \right)
\end{equation*}
is given by \[\frac2{\sqrt n}\tan\left( \frac{\pi}{6}\right)\sinh \left( \frac{\pi\sqrt{n}}{3} \right). \]
By working out similar bounds as in the proof of \eqref{eq:1234} and checking numerically for the small values of $ n, $ the proof of \eqref{eq:0123} is concluded. 
\par The inequalities \eqref{eq:0312}--\eqref{eq:0312primeprime} are equivalent to those from \eqref{SolvedWei11}--\eqref{SolvedWei33}. The proof relies on the identity 
\begin{align*}
\O(\zeta_6^2;q)&=\sum_{n=0}^{\infty}(\ol N(0,6,n)-\ol N(1,6,n)-\ol N(2,6,n)+\ol N(3,6,n))q^n\\&=\sum_{n=0}^{\infty}(\ol N(0,6,n)-\ol N(1,6,n)-\ol N(4,6,n)+\ol N(3,6,n))q^n\\&=\sum_{n=0}^{\infty}(\ol N(0,3,n)-\ol N(1,3,n))q^n
\end{align*} and details are left to the interested reader. The fact that $ \ol N(1,3,n)=\ol N(2,3,n) $ follows easily from adding the identities $ \ol N(1,6,n)=\ol N(5,6,n) $ and $ \ol N(2,6,n)=\ol N(4,6,n). $ 
\end{proof}
\begin{proof}[Proof of Theorem \ref{Thm4} (Sketch)] 

By using either \cite[Lemma 5.1]{WZ} (on identifying the notation $ \ol R(u;q)=\O (u;q) $) or identity \eqref{orthogonality} (which, in combination with \eqref{overpartsrank}, amounts to the same result), we have
\begin{align}
&\label{Mao1}\sum_{n=0}^{\infty}\ol N(0,6,n)q^n=\frac16(\O (1;q)+2\O (\zeta_6;q)+2\O(\zeta_6^2;q)+\O(\zeta_6^3;q)),\\
&\label{Mao2}\sum_{n=0}^{\infty}\ol N(1,6,n)q^n=\frac16(\O (1;q)+\phantom{2}\O (\zeta_6;q)-\phantom{2}\O(\zeta_6^2;q)-\O(\zeta_6^3;q)),\\
&\label{Mao3}\sum_{n=0}^{\infty}\ol N(2,6,n)q^n=\frac16(\O (1;q)-\phantom{2}\O (\zeta_6;q)-\phantom{2}\O(\zeta_6^2;q)+\O(\zeta_6^3;q)),\\
&\label{Mao4}\sum_{n=0}^{\infty}\ol N(3,6,n)q^n=\frac16(\O (1;q)-2\O (\zeta_6;q)+2\O(\zeta_6^2;q)-\O(\zeta_6^3;q)).
\end{align}
In light of Remark \ref{remarkmod6}, to prove the inequalities \eqref{SolvedWei1}--\eqref{SolvedWei3} it suffices to show that, for $ n\ge11, $ \[\ol N(1,6,n)\ge \ol N(2,6,n),\]\[\ol N(0,6,3n)\ge \ol N(1,6,3n),
\quad\ol N(0,6,3n+1)\ge \ol N(1,6,3n+1),\]
\[ \ol N(0,6,3n+2)\le \ol N(1,6,3n+2).\]
Therefore, on combining \eqref{Mao2} and \eqref{Mao3}, the first inequality above is equivalent to  
\begin{equation}\label{final16two}
A\left(\frac16;n \right)\ge 0,
\end{equation} whereas, for  $ i=0,1,$  the second and third are equivalent, on combining \eqref{Mao1} and \eqref{Mao2}, to 
\begin{equation}\label{final16one}
A\left(\frac16;3n+i \right)+ 3A\left( \frac13;3n+i\right)\ge0\quad\text{and}\quad A\left(\frac16;3n+2 \right)+ 3A\left( \frac13;3n+2\right)\le0.
\end{equation}
\par Again, the attentive reader might wonder what happens with the term $ \O (-1;q) $ (coming from the case $ j=c/2 $ in \eqref{orthogonality}), to which Theorem \ref{Transformations} does not apply, as its statement is formulated under the assumption $ c>2. $ However, while working out the transformations found by Bringmann and Lovejoy in this case, see \cite[Corollary 4.2]{BJoy}, and doing the same estimates as in the proof of Theorem \ref{MainThm}, one can easily infer that the sums involved are of order $ O(n^{\varepsilon}). $ Therefore, as $ n $ grows large, we only need to prove \eqref{final16two} and \eqref{final16one}, which follow immediately from Theorem \ref{MainThm}. Again, explicit bounds can be provided just as described in the next subsection, and a numerical check for the small values of $ n $ concludes the proof.
\end{proof} 

\subsection{Some explicit computations}
As we have mentioned earlier, we will now fill in the missing details from the proof of \eqref{eq:1234}, by explaining how to bound the different sums and error terms appearing in \eqref{A1/10} and \eqref{A3/10}. The same arguments apply for all the other inequalities. We have already seen that
\begin{equation*}
A\left(\frac1{10};n \right)= 2\sqrt{\frac 2n}\sum_{\substack{1\le k\le \sqrt n\\ k\equiv1,9\pmod*{10}}}\frac{D_{a,c,k}(-n,m_{1,10,k,0})}{\sqrt k}\cdot \sinh \left( \frac{3\pi\sqrt{n}}{5k} \right),
\end{equation*} 
and that the term corresponding to $ k=1 $ in \eqref{A1/10} equals 
\begin{equation}\label{mainterm}
\frac2{\sqrt n}\tan\left( \frac{\pi}{10}\right)\sinh \left( \frac{3\pi\sqrt{n}}{5} \right). 
\end{equation}
By using a trivial bound for the Kloosterman sums involved, the remaining terms can be estimated against 
\begin{equation}\label{maintermA1/10}
\frac4{\sqrt n}\sum_{2\le k\le\frac{N-1}{10}}k^{\frac12}\cdot \sinh \left( \frac{3\pi\sqrt{n}}{5(10k+1)} \right)+\frac4{\sqrt n}\sum_{1\le k\le\frac{N-9}{10}}k^{\frac12}\cdot \sinh \left( \frac{3\pi\sqrt{n}}{5(10k+9)} \right),
\end{equation}
and the contribution coming from $ \U\left(h',\frac{\l}{10},10;q_1 \right)  $ is seen to be less than
\begin{equation}\label{a5(r)}
\sqrt2\cdot e^{2\pi}\sum_{r=1}^{\infty}|a_5(r)|\cdot e^{-\frac{\pi r}{50}}\sum_{\substack{1\le k\le N\\k\equiv1,9\pmod*{10}}} k^{-\frac12}+\sqrt2\cdot e^{2\pi}\sum_{r=1}^{\infty}|b_5(r)|\cdot e^{-\frac{\pi r}{50}}\sum_{\substack{1\le k\le N\\k\equiv1,9\pmod*{10}}} k^{-\frac12}.
\end{equation}
Making the path of integration symmetric in \eqref{sigma2} introduces an error that can be estimated against 
\begin{equation}\label{integrationpathsym}
2\cdot e^{2\pi +\frac{\pi}{8}}\cdot n^{-\frac12}\sum_{\substack{1\le k\le N\\k\equiv1,9\pmod*{10}}} k^{\frac12},
\end{equation}
while integrating along the smaller arc of $ \Gamma $ gives an error not bigger than
\begin{equation}\label{smallerarc}
8\pi \cdot e^{2\pi+\frac{\pi}{16}}\cdot n^{-\frac34}\sum_{\substack{1\le k\le N\\k\equiv1,9\pmod*{10}}} k. 
\end{equation}
The sums $ \sum_2,\sum_4 $ and $ \sum_6 $ do not contribute in the case $ c=10, $ whereas $ \sum_1,\sum_3 $ can be treated simultaneously. The contribution coming from $ \O\left(\frac{h'}{10};q_1 \right)$ can be estimated against 
\begin{equation}\label{Ohprime/10}
\frac{2\cdot e^{2\pi}}{\sqrt{10}}\sum_{1\le k\le \frac N{10}}k^{-\frac12}+\frac{2\cdot e^{2\pi}}{\sqrt{10}}\sum_{r=1}^{\infty}|a_1(r)|\cdot e^{-\pi r}\sum_{1\le k\le \frac N{10}}k^{-\frac12}, 
\end{equation}
and that coming from $ \O\left(h',\frac{\l}{2},10;q_1 \right)  $
against 
\begin{equation}\label{Ohprime/10ellover2}
2\cdot e^{2\pi}\sum_{r=1}^{\infty}|a_3(r)|\cdot e^{-\frac{\pi r}{50}}\sum_{\substack{1\le k\le N\\k\equiv1,9\pmod*{10}}} k^{-\frac12}.
\end{equation} 
Using the bound \eqref{asympoverpartitions} for $ |a_3(r)|,  |a_5(r)| $ and $ |b_5(r)|, $ we get $ \sum_{r=1}^{\infty}|a_3(r)|\cdot e^{-\frac{\pi r}{50}}<1.17944 $ and $\sum_{r=1}^{\infty}|a_3(r)|\cdot e^{-\frac{\pi r}{50}}<4.01014\cdot 10^{19},  $ and similarly for $ a_5(r) $ and $ b_5(r). $
Finally, the estimates in Lemma \ref{LemmawithMordell} can be made explicit so as to give
\begin{equation}\label{sum7}
\sum_7\le \frac{2e^{2\pi}\sqrt{\pi}}5\cdot   \sum_{2\nmid k}k^{-\frac32}\sum_{\nu=1}^{k} \left(\min \left\lbrace \left| \frac{\nu}{k}-\frac1{4k}+\frac 1{10}\right| , \left| \frac{\nu}{k}-\frac1{4k}-\frac 1{10} \right| \right\rbrace  \right) ^{-1}
\end{equation}
and 
\begin{equation}\label{sum8}
\sum_8\le \frac{2e^{2\pi}\sqrt{2\pi}}5
\cdot \Bigg( \sum_{2|k,~5\nmid k }k^{-\frac32}\sum_{\nu=1}^{k} \left(\min \left\lbrace \left| \frac{\nu}{k}+\frac 1{10}\right| , \left| \frac{\nu}{k}-\frac 1{10}\right| \right\rbrace \right) ^{-1}+\frac1{10\sqrt{10}}\sum_{1\le k\le \frac N{10}}k^{-\frac12} \Bigg). 
\end{equation}
For $ a=3 $ and $ c=10, $ we proceed just like in \eqref{maintermA1/10} to get 
\[\frac4{\sqrt n}\sum_{1\le k\le\frac{N-3}{10}}k^{\frac12}\cdot \sinh \left( \frac{3\pi\sqrt{n}}{5(10k+3)} \right)+\frac4{\sqrt n}\sum_{1\le k\le\frac{N-7}{10}}k^{\frac12}\cdot \sinh \left( \frac{3\pi\sqrt{n}}{5(10k+7)} \right)\]
as an overall bound for the main contribution in \eqref{A3/10} and we use the same estimates from \eqref{a5(r)}--\eqref{sum8} on changing whatever necessary, e.g., the sums will now run over $ k\equiv3\pmod*{10}$ and $k\equiv 7\pmod*{10}. $ Putting all estimates together, we see that the term in \eqref{mainterm} is dominant for $ n\ge1030. $ The inequality \eqref{eq:1234} can be checked numerically in Mathematica to hold true also for $ n<1030. $
\section*{Acknowledgments}
The author is grateful to Kathrin Bringmann for suggesting this project and for many useful discussions, and to Chris Jennings-Shaffer for permanent advice, as well as for the patience of carefully reading an earlier version of the manuscript and double-checking some computations. The author would also like to kindly thank the anonymous referee for the very helpful suggestions made on improving the exposition of this paper.~The work was supported by the European Research Council under the European Union's Seventh Framework Programme (FP/2007--2013)/ERC Grant agreement n. 335220 --- AQSER.

\end{document}